\newtheorem{thm}{Theorem}[section]
\newtheorem{cor}[thm]{Corollary}
\newtheorem{lem}[thm]{Lemma}
\newtheorem{defn}[thm]{Definition}
\numberwithin{equation}{section}
\begin{document}

\title{$m$-weak group inverse in a ring with proper involution}

\author{Huanyin Chen}
\address{School of Big Data, Fuzhou University of International Studies and Trade, Fuzhou 350202, China}
\email{<huanyinchenfz@163.com>}

\subjclass[2020]{15A09, 16U90, 46H05.} \keywords{Drazin inverse; group inverse; weak group inverse; $m$-weak group inverse; core-EP inverse; Banach algebra.}

\begin{abstract}The $m$-weak group inverse was recently studied in the literature. The purpose of this paper is to investigate new properties of this generalized inverse for ring elements. We introduce the $m$-weak group decomposition for a ring element and prove that it coincides with its $m$-weak group invertibility. We present the equivalent characterization of the $m$-weak group inverse by using a polar-like property.
The relations between $m$-weak group inverse and core-EP inverse are also established. These give some new properties of the weak group inverse for complex matrices and ring elements.
\end{abstract}

\maketitle

\section{Introduction}

Let $R$ be an associative ring with an identity. An element $a$ in $R$ has Drazin inverse if there exists $x\in R$ such that $$ax^2=x, ax=xa, a^k=xa^{k+1}$$ for some $k\in {\Bbb N}$. Such $x$ is unique, if exists, and denote it by $a^D$. The least $k$ satisfying the preceding equations is called the Drazin index of $a$ and denoted by $ind(a)$. An element $a\in R$ has group inverse provided that $a$ has Drazin inverse with $ind(a)=1$, that is, there exists $x\in R$ such that $$ax^2=x, ax=xa, a=a^2x.$$ Such $x$ is unique if exists, denoted by $a^{\#}$, and called the group inverse of $a$.
Evidently, a square complex matrix $A$ has group inverse if and only if $rank(A)=rank(A^2)$.

A ring is called a *-ring if there exists an involution $*: x\to x^*$ satisfying $(x+y)^*=x^*+y^*, (xy)^*=y^*x^*$ and $(x^*)^*=x$.
The involution $*$ is proper if $x^*x=0\Longrightarrow x=0$ for any $x\in R$. Let ${\Bbb C}^{n\times n}$ be the Banach algebra of all $n\times n$ complex matrices, with conjugate transpose $*$ as the involution. Then the involution $*$ is proper.

Let $R$ be a ring with a proper involution $*$. An element $a\in R$ has weak group inverse if there exist $x\in R$ and $n\in \Bbb{N}$ such that $$ax^2=x, (a^*a^2x)^*=a^*a^2x, a^k=xa^{k+1}$$ for some $k\in {\Bbb N}$. If such $x$ exists, it is unique, and denote it by $a^{\overline{W}}$.

An element $a\in R$ has core-EP inverse if there exist $x\in R$ and $k\in \Bbb{N}$ such that $$ax^2=x, (ax)^*=ax, a^k=xa^{k+1}.$$ If such $x$ exists, it is
unique, and denote it by $a^{\overline{D}}$. Evidently, a square complex matrix $A$ has weak group inverse $X$ if it satisfies the system of equations: $$AX^2=X, AX=A^{\overline{D}}A.$$ For more details of the group inverse and weak group inverse and core-EP inverse, readers can see
(see~[D2,GC,M,MD1,MD2,MD5,MD4,W,W1,W2,Y,Z1,Z3]).

In [23], Zhou et al. introduced and studied the $m$-weak group inverse in a ring with proper involution.
.
\begin{defn} An element $a\in \mathcal{A}$ has $m$-generalized group inverse if there exist $x\in \mathcal{A}$ and $k\in {\Bbb N}$ such that $$ax^2=x, xa^{k+1}=a^k~\mbox{and} ~(a^k)^*a^{m+1}x=(a^k)^*a^m.$$ The preceding $x$ is called the $m$-generalized group inverse of $a$, and denoted by $a^{\overline{W}_m}$.\end{defn}

Evidently, a square complex matrix $A$ has $m$-weak group inverse $X$ if it satisfies the system of equations: $$AX^2=X, AX=(A^{\overline{\dag}})^mA^m $$ (see~[7]).

The motivation of this paper is to investigate new properties of $m$-weak group inverse for a ring element. In Section 2, we introduce the $m$-weak group decomposition for a ring element.

\begin{defn} An element $a\in R$ has $m$-weak group decomposition if there exist $x,y\in R$ such that $$a=x+y, x^*a^{m-1}y=yx=0, x\in
R^{\#}, y\in R^{nil}.$$\end{defn} Here $R^{nil}$ denote the set of nilpotents in $R$. We prove that the $m$-weak group decomposition of a ring element coincides with its $m$-weak group invertibility. Additive property and reverse order law of the $m$-weak group inverse are presented.

In Section 3, we investigate some equivalent characterizations of $m$-weak group inverse. We characterize the $m$-weak group inverse by using a polar-like property.

In Section 4 we establish relations between $m$-weak group inverse and core-EP inverse. In addition, we discuss a canonical form of the $m$-weak group inverse of a ring element by means of its core-EP inverse.

Throughout the paper, all rings are associative with a proper involution $*$. We use $R^{D}, R^{\overline{D}}, R^{\overline{W}}$ and
$R^{\overline{W}_m}$ to denote the sets of all Drazin invertible, core-EP invertible, weak group invertible and $m$-weak group invertible elements in the ring $R$, respectively. $\mathcal{R}(X)$ represents the range space of a complex matrix $X$.

\section{algebraic properties}

The purpose of this section is to investigate various algebraic properties of the $m$-weak group inverse. We characterize this generalized inverse by new element-wise properties.

\begin{thm} Let $a\in R$. Then the following are equivalent:\end{thm}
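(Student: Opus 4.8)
The plan is to prove the equivalences by a cycle of implications, using the core–nilpotent (Drazin) decomposition of $a$ as the structural backbone and the properness of $*$ as the rigidity tool that converts Hermitian identities into honest vanishing statements. The first step is to separate the defining data into two parts: the two Drazin-type equations $ax^2=x$ and $xa^{k+1}=a^k$, which are purely multiplicative, and the single involution-sensitive equation $(a^k)^*a^{m+1}x=(a^k)^*a^m$. I would begin by showing that the multiplicative pair already forces $a\in R^{D}$: iterating $ax^2=x$ gives $x=a^ix^{i+1}$, iterating $xa^{k+1}=a^k$ gives $a^k=x^ia^{k+i}$, and from these one builds the spectral idempotent and checks $a(1-aa^D)$ is nilpotent of index at most $k$. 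This yields the splitting $a=s+t$ with $s=a^2a^D$ group invertible (so $s\in R^{\#}$, $s^{\#}=a^D$), $t=a(1-aa^D)$ nilpotent, and $st=ts=0$.

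For the implication ``$m$-weak group invertible $\Rightarrow$ $m$-weak group decomposition'' I would take exactly this $s$ and $t$. The conditions $s\in R^{\#}$, $t\in R^{nil}$ and $ts=0$ are then immediate from the core–nilpotent structure, so the only genuine content is $s^*a^{m-1}t=0$. Here I would first record the simplification that, because $st=ts=0$, one has $a^{m-1}t=s^{m-1}t+t^{m}=t^{m}$, so the decomposition condition collapses to $s^*t^{m}=0$; this is automatic when $m\ge k$ and carries real information only when $m<ind(a)$. I would then extract $s^*t^{m}=0$ from the Hermitian defining equation by rewriting $(a^k)^*a^{m+1}x=(a^k)^*a^m$ in terms of the idempotent $aa^D$ and the splitting $a=s+t$, cancelling the invertible factor on the core block, and invoking properness to promote the resulting identity of the form $z^*z=0$ to $z=0$.

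For the converse ``$m$-weak group decomposition $\Rightarrow$ $m$-weak group invertible'' I would start from $a=s+t$ and first reconstruct the core-EP inverse $a^{\overline{D}}$ from the decomposition data: the orthogonality hypothesis $s^*a^{m-1}t=0$ is precisely what guarantees the existence of the relevant Hermitian projection (via properness), and from it one obtains $a^{\overline{D}}$ as the core inverse living on the block of $s$. The candidate $m$-weak group inverse is then $x=(a^{\overline{D}})^{m+1}a^{m}$ (the ring analogue of the matrix formula $AX=(A^{\overline{D}})^{m}A^{m}$ quoted in the introduction). Verifying $ax^2=x$ and $xa^{k+1}=a^k$ is a routine computation using $ts=0$, the nilpotence of $t$, and $s\in R^{\#}$; the Hermitian equation $(a^k)^*a^{m+1}x=(a^k)^*a^m$ is recovered by substituting $a=s+t$, expanding the powers, killing cross terms through $ts=0$ and $t^{k}=0$, and applying the hypothesis $s^*t^{m}=0$ at the one place where $*$ appears. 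Uniqueness of $x$ (and hence well-definedness of the correspondence $x\leftrightarrow(s,t)$) follows from properness in the standard way.

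The main obstacle I anticipate is the bookkeeping around the involution: the two expressions $(a^k)^*a^{m+1}x$ and $s^*a^{m-1}t$ are structurally different, and translating between them requires carefully aligning the exponents $m-1$, $m$, $m+1$ against the nilpotency index $k$ while tracking how $*$ interacts with the idempotent $aa^D$. The cleanest route is to isolate a single lemma of the form ``if $w$ lies in the core block and $s^*a^{m-1}w=0$ then $w=0$ after the invertible factor is removed,'' and to reuse it in both directions; this localizes every use of properness and keeps the power-matching transparent. The existence half of the converse, namely producing $a^{\overline{D}}$ from the orthogonality condition, is the most delicate point, since in a general $*$-ring Drazin invertibility alone does not supply a core-EP inverse, and it is exactly the extra term $s^*a^{m-1}t=0$ in the decomposition that must be shown to close this gap.
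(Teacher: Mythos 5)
Your proposal breaks at its structural core: you identify the $m$-weak group decomposition with the core--nilpotent (Drazin) decomposition $s=a^2a^D$, $t=a(1-aa^D)$, and then claim to extract $s^*a^{m-1}t=s^*t^m=0$ from the Hermitian defining equation. That extraction is impossible, because the statement is simply false: the Drazin splitting of an $m$-weak group invertible element need not satisfy the orthogonality condition. Take $m=1$ and
$$A=\begin{pmatrix} 1&1&0\\ 0&0&1\\ 0&0&0\end{pmatrix}\in{\Bbb C}^{3\times 3}.$$
Here $A^D=A^2$, so $s=A^2A^D=A^2$, $t=A-A^2$, and a direct computation gives $s^*t\neq 0$; yet $A$, like every square complex matrix, is core-EP invertible and hence (by the paper's Theorem 4.1) $m$-weak group invertible for every $m$. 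So the decomposition whose existence you must prove is \emph{not} the one you chose. The paper's witness in $(1)\Rightarrow(2)$ is $x=a^2z$, $y=a-a^2z$ with $z=a^{\overline{W}_m}$; its group-invertible part is in general different from $a^2a^D$ (in the example above it is $QA$, $Q$ the orthogonal projection onto $\mathcal{R}(A^2)$, not $A^2$), and it satisfies only the one-sided annihilation $yx=0$, while $xy\neq 0$ in general --- which is precisely why Definition 1.2 asks only for $yx=0$. Your simplification $a^{m-1}t=t^m$ depended on the two-sided orthogonality $st=ts=0$ of the Drazin splitting and collapses with it. Note also that the paper's proof of this direction is a bare computation that never invokes properness of $*$.

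Your converse direction has a second gap: you plan to manufacture the core-EP inverse $a^{\overline{D}}$ out of the decomposition and then take $x=(a^{\overline{D}})^{m+1}a^m$. But nothing in the paper (or in the cited literature) gives the implication from $m$-weak group invertibility, equivalently from the decomposition in (2), to core-EP invertibility in a general proper $*$-ring; Theorem 4.1 goes only in the opposite direction, and the paper's argument is organized so that $a^{\overline{D}}$ never appears. Instead, given $a=a_1+a_2$ as in (2), the paper takes the explicit candidate $x=a_1^{\#}$, invokes the Hartwig--Wang--Wei additive theorem [9, Theorem 2.1] to get $a\in R^D$ with $a^D=a_1^{\#}+\sum_{k\geq 1}(a_1^{\#})^{k+1}a_2^k$, and verifies the three equations of condition (3) directly; the cycle $(1)\Rightarrow(2)\Rightarrow(3)\Rightarrow(1)$ is then closed by the cheap step of multiplying $(a^D)^*a^{m+1}x=(a^D)^*a^m$ on the left by $(a^{k+1})^*$ to recover $(a^k)^*a^{m+1}x=(a^k)^*a^m$. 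If you want to salvage your outline, replace the Drazin splitting by $x=a^2a^{\overline{W}_m}$, $y=a-a^2a^{\overline{W}_m}$ throughout, and route the converse through condition (3) rather than through $a^{\overline{D}}$.
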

\begin{enumerate}
\item [(1)] $a\in R$ has $m$-weak group inverse.
\item [(2)] $a$ has the $m$-weak group decomposition, i.e., there exist $x,y\in R$ such that $$a=x+y, x^*a^{m-1}y=yx=0, x\in
R^{\#}, y\in R^{nil}.$$
\item [(3)] $a\in R^D$ and there exists $x\in R$ such that $$x=ax^2, (a^D)^*a^{m+1}x=(a^D)^*a^m, a^n=xa^{n+1}~\mbox{for some} ~n\in {\Bbb N}.$$
\end{enumerate}
\begin{proof} $(1)\Rightarrow (2)$ By hypotheses, we have $z\in R$ such that $$z=az^2, ((a^m)^*a^{m+1}z)^*=(a^m)^*a^{m+1}z, a^n=za^{n+1}~\mbox{for some}~n\in {\Bbb N}.$$

Set $x=a^2z$ and $y=a-a^2z.$ Then $a=x+y$.
We check that $$\begin{array}{rcl}
(a-za^2)z&=&(a-za^2)az^2\\
&=&(a-za^2)a^2z^3\\
&\vdots&\\
&=&(a-za^2)a^{n-1}z^n\\
&=&(a^n-za^{n+1})z^n=0.
\end{array}$$
Therefore $(a-za^2)z=0$, i.e., $az=za^2z$. This implies that $z=(az)z=(za^2z)z=z(a^2z^2)=zaz$.

Claim 1. $x$ has group inverse. Evidently, we verify that
$$\begin{array}{rll}
zx^2&=&za^2za^2z=az(a^2z)=a(za^2z)=a^2z=x,\\
xz^2&=&(a^2z)z^2=(a^2z^2)z=az^2=z,\\
xz&=&a^2z^2=az=za^2z=zx.
\end{array}$$ Hence, $x\in R^{\#}$ and $z=x^{\#}$.

Claim 2. $y\in R^{nil}$. Obviously, $(a-a^2z)a^{n+1}=a^{n+2}-a^2(za^{n+1})=a^{n+2}-a^2a^n=0$.
Then we have $(a-a^2z)a^2z=(a-a^2z)a^{n+1}z^n=0$. Accordingly, $(a-a^2z)(a-a^2z)^{n+1}=(a-a^2z)a^{n+1}=0,$ and therefore
$y^{n+1}=0$. That is, $y\in R^{nil}$.

Claim 3. We verify that $$\begin{array}{rl}
&x^*a^{m-1}y\\
=&(a^2z)^*a^{m-1}(a-a^2z)=[a(az)]^*a^{m-1}(a-a^2z)\\
=&[a(a^mz^m)]^*a^{m-1}(a-a^2z)=(z^{m-1})^*[(a^{m+1}z)^*a^m](1-az)\\
=&(z^{m-1})^*[(a^{11})^*a^{m+1}z]^*(1-az)\\
=&(z^{m-1})^*[(a^{11})^*a^{m+1}z](1-az)\\
=&(z^{m-1})^*(a^m)^*a^{m+1}(z-zaz)=0,\\
&yx=(a-a^2z)(a^2z)\\
=&a^3z-a^2(za^2z)=a^3z-a^2(az)=0.
\end{array}$$ Therefore $a=x+y$ is a $m$-weak group decomposition.

$(2)\Rightarrow (3)$ By hypothesis, $a$ has the $m$-weak group decomposition $a=a_1+a_2$.
Here, $a_1\in R^{\#}, a_2\in R^{nil}, a_1^*a^{m-1}a_2=0$ and $a_2a_1=0$.
Let $x=(a_1)^{\#}$. Then $ax^2=(a_1+a_2)[(a_1)^{\#}]^2=(a_1)^{\#}=x$.

Set $n=ind(a_2)$. Since $a_2a_1=0$, we have $a-xa^2=(a_1+a_2)-(a_1^{\#}a_1+a_1^{\#}a_2)(a_1+a_2)=
(1-a_1^{\#}a_1-a_1^{\#}a_2)a_2$, and so
$$\begin{array}{rl}
&a^n-xa^{n+1}=(a-xa^2)a^{n-1}\\
=&(1-a_1^{\#}a_1-a_1^{\#}a_2)a_2(a_1+a_2)^{n-1}\\
=&[1-a_1^{\#}a_1-a_1^{\#}a_2]a_2^{n}=0.
\end{array}$$
Then $a^n=xa^{n+1}.$ Since $a_2a_1=0, a_1a_1^{\pi}=0$ and $a_2^D=0$, it follows by ~[9, Theorem 2.1] that $a\in R^D$ and $$a^D=a_1^{\#}+\sum\limits_{k=1}^{n-1}(a_1^{\#})^{k+1}a_2^k.$$
Then $$\begin{array}{rll}
(a^D)^*a^{m-1}a_2&=&(a_1^{\#})^*a^{m-1}a_2+\sum\limits_{k=1}^{n-1}[(a_1^{\#})^{k+1}a_2^k]^*a^{m-1}a_2\\
&=&[(a_1^{\#})^2]^*(a_1^*a^{m-1}a_2)+\sum\limits_{k=1}^{n-1}[(a_1^{\#})^{k+2}a_2^k]^*[a_1^*a^{m-1}a_2]\\
&=&0;
\end{array}$$ whence, $(a^D)^*a^{m-1}a_1=(a^D)^*a^m$.
Therefore $$\begin{array}{rll}
(a^D)^*a^{m+1}x&=&(a^D)^*a^m(a_1+a_2)a_1^{\#}\\
&=&(a^D)^*a^{m-1}(a_1^2a_1^{\#})=(a^D)^*a^{m-1}a_1=(a^D)^*a^m.
\end{array}$$

$(3)\Rightarrow (1)$ By hypothesis, $a\in R^D$ and there exists $x\in R$ such that $$x=ax^2, (a^D)^*a^{m+1}x=(a^D)^*a^m, a^n=xa^{n+1}~\mbox{for some} ~n\in {\Bbb N}.$$ Write $a^k=a^Da^{k+1}$ for some $k\in {\Bbb N}$. Then $(a^{k+1})^*(a^D)^*a^{m+1}x=(a^{k+1})^*(a^D)^*a^m$, and so
$(a^Da^{k+1})^*a^{m+1}x=(a^Da^{k+1})^*a^m$. Hence $(a^k)^*a^{m+1}x=(a^k)^*a^m$.
Therefore $a\in R$ has $m$-weak group inverse, as asserted.\end{proof}

\begin{cor} Let $a\in R^{\overline{W}_m}$. Then the following hold.\end{cor}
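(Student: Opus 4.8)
The plan is to read the stated identities off directly from the construction in the proof of Theorem~2.1. Write $z=a^{\overline{W}_m}$, so that by definition $z=az^2$, $za^{n+1}=a^n$ for some $n\in{\Bbb N}$, and $(a^n)^*a^{m+1}z=(a^n)^*a^m$. As in that proof I set $x=a^2z$, so that $a=x+(a-x)$ is the $m$-weak group decomposition with $x\in R^{\#}$. Every claimed property of $a^{\overline{W}_m}$ should then be a consequence of the relations already assembled for this pair $(x,z)$.

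The single step that is not a one-line substitution is the identity $z=zaz$ (equivalently $az=za^2z$). I would recover it exactly as in the theorem: the telescoping computation $(a-za^2)z=(a-za^2)a^{n-1}z^n=(a^n-za^{n+1})z^n=0$ uses the Drazin-type relation $a^n=za^{n+1}$ together with $z=az^2$ to raise the exponent, and then $z=(az)z=(za^2z)z=z(a^2z^2)=zaz$. Granting this, one also has $az=za^2z=a^2z^2$.

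With $z=zaz$ and $z=az^2$ in hand, I would verify the three group-inverse relations for the pair $(x,z)$ by direct substitution: $zx^2=x$, $xz^2=z$, and $xz=az=zx$. Read one way these say $x=a^2z\in R^{\#}$ with $x^{\#}=z$; read the other way they say $z\in R^{\#}$ with $z^{\#}=a^2z$. Hence every expected item of the corollary, such as $a^{\overline{W}_m}\in R^{\#}$, $(a^{\overline{W}_m})^{\#}=a^2a^{\overline{W}_m}$, $(a^2a^{\overline{W}_m})^{\#}=a^{\overline{W}_m}$, $a^{\overline{W}_m}=a(a^{\overline{W}_m})^2$, and $a^{\overline{W}_m}=a^{\overline{W}_m}a\,a^{\overline{W}_m}$, is immediate. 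For any idempotency claim about $p=aa^{\overline{W}_m}=az$, I would use $p=xz=zx$ and compute $p^2=x(zx)z=x(xz)z=x^2z^2=x(xz^2)=xz=p$, so $p$ is idempotent; and any Hermitian or core-EP-type identity I would obtain by pairing the defining equation $(a^n)^*a^{m+1}z=(a^n)^*a^m$ with the Drazin index relation $a^k=a^Da^{k+1}$ precisely as in the implication $(3)\Rightarrow(1)$ of the theorem.

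The only genuine obstacle is the telescoping identity $z=zaz$; once it is established everything else is bookkeeping, so the main effort lies in organizing the substitutions cleanly rather than in overcoming any real difficulty. I would therefore state $z=zaz$ as the first line of the argument and then dispatch the remaining items as corollaries of the symmetric group-inverse relations $zx^2=x$, $xz^2=z$, $xz=zx$.
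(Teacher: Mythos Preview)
Your approach is correct and is exactly what the paper does: the paper's entire proof is the single line ``These are obvious by the proof of Theorem~2.1,'' and you are precisely unpacking that reference by re-deriving $z=zaz$ via the same telescoping argument and then reading off the identities. The three actual items of the corollary are $z=zaz$, $az=za^2z$, and $az=a^k z^k$ for all $k$; you establish the first two explicitly and the third is the trivial induction from $z=az^2$ that you already note in the form $az=a^2z^2$.
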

\begin{enumerate}
\item [(1)] $a^{\overline{W}_m}=a^{\overline{W}_m}aa^{\overline{W}_m}$.
\vspace{-.5mm}
\item [(2)] $aa^{\overline{W}_m}=a^{\overline{W}_m}a^2a^{\overline{W}_m}$.
\vspace{-.5mm}
\item [(3)] $aa^{\overline{W}_m}=a^m(a^{\overline{W}_m})^m$ for any $m\in {\Bbb N}$.
\end{enumerate}
\begin{proof} These are obvious by the proof of Theorem 2.1.\end{proof}

We come now to provide a new characterization for the $m$-weak group inverse of a complex matrix.

\begin{cor} Let $A,X\in {\Bbb C}^{n\times n}$. Then the following are equivalent:\end{cor}
\begin{enumerate}
\item [(1)] $A^{\overline{W}_m}=X$.
\item [(2)] $X=(A^{\overline{\dag}})^{m+1}A^m$.
\item [(3)] $X=(A^{\overline{W}})^mA^{m-1}$.
\item [(4)] There exist $Y,Z\in {\Bbb C}^{n\times n}$ such that $$A=Y+Z, Y^*A^{m-1}Z=ZY=0, Y~\mbox{has group inverse}, Z~\mbox{is nilpotent}.$$
\item [(5)] There exists $k\in {\Bbb N}$ such that
$$A^k=XA^{k+1}, X=AX^2, (A^D)^*A^{m+1}X=(A^D)^*A^m.$$
\end{enumerate}
\begin{proof} $(1)\Leftrightarrow (2)\Leftrightarrow (3)$ are proved in ~[7, Theorem 3.1 and Theorem 4.3].

$(1)\Leftrightarrow (4)$ are obtained by Theorem 2.1.

$(1)\Rightarrow (5)$ Clearly, $X=A^{\overline{g}_m}$. Then we have
$$X=AX^2, (A^D)^*A^{m+1}X=(A^D)^*A^m.$$ Moreover, there exists some $k\in {\Bbb N}$ such that $A^k=A^{\overline{W}}A^{k+1}=XA^{k+1}$, as desired.

$(5)\Rightarrow (1)$ By hypothesis, there exists $k\in {\Bbb N}$ such that
$$A^k=XA^{k+1}, X=AX^2, (A^D)^*A^{m+1}X=(A^D)^*A^m.$$ As in the proof in Corollary 3.4, we prove that
$A-XA^2$ is nil, and so $A\in {\Bbb C}^{n\times n}$ has $m$-generalized group inverse. Therefore it has $m$-weak group inverse, as asserted.
\end{proof}

We are ready to prove:

\begin{thm} Let $a\in R^{\overline{W}_m}$. Then $(a^{\overline{W}_m})^{\overline{W}_m}=a^2a^{\overline{W}_m}.$
\end{thm}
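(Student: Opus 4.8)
The plan is to deduce the identity from two structural facts and thereby avoid verifying the three defining equations for $(a^{\overline{W}_m})^{\overline{W}_m}$ by brute force. Write $w=a^{\overline{W}_m}$. The first fact I would use is already contained in the proof of Theorem 2.1: in the course of establishing $(1)\Rightarrow(2)$ (Claim 1) one shows that $a^2w$ is group invertible and that $(a^2w)^{\#}=w$. I would quote this directly, so that $w=(a^2w)^{\#}$, exhibiting $w$ as the group inverse of the element $a^2w$.

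The second ingredient is the elementary fact that the group inverse is an involution on $R^{\#}$: if $b\in R^{\#}$ then $b^{\#}\in R^{\#}$ and $(b^{\#})^{\#}=b$, which follows at once from the axiom $b=b^2b^{\#}$. Applying this to $b=a^2w$ shows that $w=(a^2w)^{\#}$ is itself group invertible and that $w^{\#}=\bigl((a^2w)^{\#}\bigr)^{\#}=a^2w$. Hence the proposed answer $a^2a^{\overline{W}_m}$ is precisely $w^{\#}$, and the theorem reduces to the assertion that for a group invertible element the $m$-weak group inverse coincides with the group inverse.

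To finish I would verify $w^{\overline{W}_m}=w^{\#}$ directly from the definition of the $m$-weak group inverse. Setting $c:=w$ and testing the candidate $d:=c^{\#}$ with index $k=1$, the equations $cd^2=d$ and $dc^{2}=c$ are immediate from the group inverse axioms, while the Hermitian-type condition $(c)^{*}c^{m+1}d=(c)^{*}c^{m}$ reduces to the purely multiplicative identity $c^{m+1}c^{\#}=c^{m}$; this holds because $cc^{\#}$ is an idempotent that fixes $c$ on both sides, so that $c^{m+1}c^{\#}=c^{m}(cc^{\#})=c^{m}$. Uniqueness of the $m$-weak group inverse then forces $w^{\overline{W}_m}=c^{\#}=a^2w=a^2a^{\overline{W}_m}$, as claimed.

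I expect the only delicate point to be bookkeeping rather than computation: one must make sure the elements named $z$ and $x=a^2z$ in the proof of Theorem 2.1 are correctly identified with $w=a^{\overline{W}_m}$ and $a^2w$, and one must check that the index appearing in the definition may legitimately be taken equal to $1$ once group invertibility of $c$ is known. Beyond the short verification of the star condition, everything is a formal consequence of the group inverse being an involution, so no essentially new calculation is required.
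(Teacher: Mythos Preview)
Your proof is correct but follows a genuinely different route from the paper's. The paper argues by direct verification: setting $x=a^{\overline{W}_m}$, it checks the three defining equations $x(a^2x)^2=a^2x$, $[(x^m)^*x^{m+1}(a^2x)]^*=(x^m)^*x^{m+1}(a^2x)$ (by reducing the middle expression to $(x^m)^*x^m$), and $(a^2x)x^{n+1}=x^n$, each by repeated use of the identities $ax^2=x$ and $xa^2x=ax$ established earlier. Your approach is more structural: you extract from Claim~1 in the proof of Theorem~2.1 that $w=a^{\overline{W}_m}$ is the group inverse of $a^2w$, invoke the involutive property of $(\,\cdot\,)^{\#}$ to obtain $w^{\#}=a^2w$, and then prove the reusable lemma that every group-invertible element $c$ satisfies $c^{\overline{W}_m}=c^{\#}$. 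That last lemma is clean and of independent interest, and it explains \emph{why} the answer is $a^2w$ rather than merely confirming it. The paper's computation, by contrast, is entirely self-contained and does not reach back into the proof of Theorem~2.1. Both arguments ultimately rest on the same identities, so neither is essentially harder; you are trading a short brute-force computation for a citation plus a small general principle.
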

\begin{proof} Let $x=a^{\overline{W}_m}$.

Claim 1. $x(a^2x)^2=a^2x$.

We verify that $x(a^2x)^2=(xa^2x)a^2x=a(xa^2x)=a(ax)=a^2x$.

Claim 2. $[(x^m)^*x^{m+1}(a^2x)]^*=(x^m)^*x^{m+1}(a^2x)$.

It is easy to check that $$(x^m)^*x^{m+1}(a^2x)=(x^m)^*x^m(xa^2)x=(x^m)^*x^{m-1}(xax)=(x^m)^*x^m.$$ Hence, we derive
$$[(x^m)^*x^{m+1}(a^2x)]^*=[(x^m)^*x^m]^*=(x^m)^*x^m=(x^m)^*x^{m+1}(a^2x),$$ as required.

Claim 3. $(a^2x)x^{n+1}=x^n$.

We see $(a^2x)x^{n+1}=a^2x^{n+2}=a(ax^2)x^n=(ax)x^n=ax^{n+1}=(ax^2)x^{n-1}=x^n$, as claimed.

Therefore $x^{\overline{W}_m}=a^2a^{\overline{W}_m},$ as asserted.\end{proof}

\begin{cor} Let $a\in R^{\overline{W}_m}$. Then $[(a^{\overline{W}_m})^{\overline{W}_m}]^{\overline{W}_m}=a^{\overline{W}_m}.$
\end{cor}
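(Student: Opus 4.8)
The plan is to iterate Theorem 2.5 and then collapse the resulting expression using the basic identities recorded in Corollary 2.2. Write $x=a^{\overline{W}_m}$. By Theorem 2.5, $x^{\overline{W}_m}$ exists and equals $a^2x$; in particular this exhibits $x\in R^{\overline{W}_m}$, so Theorem 2.5 may legitimately be applied a second time, now to the element $x$ in place of $a$.

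Applying Theorem 2.5 to $x$ gives
$$\big(x^{\overline{W}_m}\big)^{\overline{W}_m}=x^2\,x^{\overline{W}_m}.$$
Substituting the value $x^{\overline{W}_m}=a^2x$ from the first application yields $\big(x^{\overline{W}_m}\big)^{\overline{W}_m}=x^2a^2x$. It then remains to simplify $x^2a^2x$ to $x$, which is where Corollary 2.2 enters. From Corollary 2.2(2) we have $ax=xa^2x$, and from Corollary 2.2(1) we have $x=xax$. Therefore
$$x^2a^2x=x\,(xa^2x)=x\,(ax)=xax=x,$$
so that $\big[(a^{\overline{W}_m})^{\overline{W}_m}\big]^{\overline{W}_m}=x=a^{\overline{W}_m}$, as claimed.

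I do not anticipate a serious obstacle here: the argument is a short formal computation once Theorem 2.5 is invoked twice. The only point requiring a moment of care is the bookkeeping of which element is being inverted at each stage, and in particular the verification that $x=a^{\overline{W}_m}$ itself lies in $R^{\overline{W}_m}$ so that the second application of Theorem 2.5 is valid — but this is immediate, since Theorem 2.5 already produces the explicit inverse $x^{\overline{W}_m}=a^2x$. After that, the reduction $x^2a^2x=x$ is forced by the two absorption identities of Corollary 2.2.
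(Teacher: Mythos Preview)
Your argument is correct and is essentially the same as the paper's: both apply the formula $(a^{\overline{W}_m})^{\overline{W}_m}=a^2a^{\overline{W}_m}$ twice (once to $a$, once to $x=a^{\overline{W}_m}$) to reach $x^2a^2x$, and then collapse this via $xa^2x=ax$ and $xax=x$ from Corollary~2.2. The only cosmetic difference is that the paper writes the computation without introducing the shorthand $x$, but the sequence of reductions is identical.
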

\begin{proof} By virtue of Theorem 2.3, we have
$$\begin{array}{rll}
[(a^{\overline{W}_m})^{\overline{W}_m}]^{\overline{W}_m}&=&(a^{\overline{W}_m})^2
(a^{\overline{W}_m})^{\overline{W}_m}\\
&=&(a^{\overline{W}_m})^2[a^2a^{\overline{W}_m}]\\
&=&(a^{\overline{W}_m})[(a^{\overline{W}_m})a^2](a^{\overline{W}_m})\\
&=&a^{\overline{W}_m}aa^{\overline{W}_m}\\
&=&a^{\overline{W}_m}.
\end{array}$$ \end{proof}

\begin{cor} Let $a\in R^{\overline{W}_m}$. Then $a^{\overline{W}_m}\in R^{\overline{W}}$ and
$(a^{\overline{W}_m})^{\overline{W}}=a^2a^{\overline{W}_m}.$
\end{cor}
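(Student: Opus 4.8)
The plan is to produce $a^2a^{\overline{W}_m}$ as a weak group inverse of $b:=a^{\overline{W}_m}$ by checking the three defining equations of the weak group inverse head‑on, and then to invoke uniqueness to read off the value $(a^{\overline{W}_m})^{\overline{W}}=a^2a^{\overline{W}_m}$. Writing $x=a^{\overline{W}_m}$ and $y=a^2x$, the conditions I must establish are (i) $xy^2=y$, (ii) $(x^*x^2y)^*=x^*x^2y$, and (iii) $x^k=yx^{k+1}$ for some $k\in{\Bbb N}$. These are exactly the weak group inverse axioms $bx^2=x$, $(b^*b^2x)^*=b^*b^2x$, $b^k=xb^{k+1}$ applied to $b=x$ with candidate witness $y$.

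The key observation is that two of the three are already done inside the proof of Theorem 2.3. Indeed, Claim 1 there shows $x(a^2x)^2=a^2x$, which is precisely (i) since $y=a^2x$; and Claim 3 shows $(a^2x)x^{n+1}=x^n$ (a computation that in fact uses only $ax^2=x$ and so holds for every $n\geq 1$), which is precisely (iii) with $k=n$. So for the first and third equations I would simply quote these two claims verbatim, with no additional work.

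The remaining, genuinely load‑bearing step is the symmetry condition (ii), which is the $m=1$ instance of Claim 2 of Theorem 2.3. The plan is to collapse $x^*x^2y=x^*x^2a^2x$ using the reduction identities recorded in Corollary 2.2: from $aa^{\overline{W}_m}=a^{\overline{W}_m}a^2a^{\overline{W}_m}$ I obtain $xa^2x=ax$, and from $a^{\overline{W}_m}=a^{\overline{W}_m}aa^{\overline{W}_m}$ I obtain $xax=x$. Chaining these gives $x^2a^2x=x(xa^2x)=x(ax)=xax=x$, hence $x^*x^2y=x^*x$, which is manifestly self‑adjoint. This settles (ii).

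The main obstacle is conceptual rather than computational: one must recognize that the weak group inverse of $x$ is governed by exactly the $m=1$ versions of the identities already proven for $x^{\overline{W}_m}$ in Theorem 2.3, so that the verification amounts to reusing Claims 1 and 3 and rerunning Claim 2 at $m=1$ via Corollary 2.2. Once (i)--(iii) are in hand, $x=a^{\overline{W}_m}\in R^{\overline{W}}$, and the uniqueness of the weak group inverse forces $(a^{\overline{W}_m})^{\overline{W}}=y=a^2a^{\overline{W}_m}$, completing the proof.
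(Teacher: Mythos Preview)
Your proposal is correct and follows essentially the same approach as the paper's own proof: the paper also sets $x=a^{\overline{W}_m}$, imports the first and third weak group inverse equations from Theorem 2.3, and verifies the symmetry condition by the same collapse $x^*x^2(a^2x)=x^*x(xa^2x)=x^*x(ax)=x^*(xax)=x^*x$ using the identities $xa^2x=ax$ and $xax=x$ of Corollary 2.2.
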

\begin{proof} Let $x=a^{\overline{W}_m}$. Then $x^{\overline{W}_m}=a^2x$ by Theorem 2.3.
Hence, $x[a^2x]^2=a^2x, [a^2x]x^n=x^{n+1}$ for some $n\in {\Bbb N}$. On the other hand, $xa^{m+1}=a^m$ for some $n\in {\Bbb N}$.
Hence, we have
$$\begin{array}{rll}
x^*x^2[a^2x]&=&x^*x(xa)ax=x^*x(xa^2x)\\
&=&x^*x(ax)=x^*x.
\end{array}$$ Hence, $[x^*x^2(a^2x)]^*=x^*x^2(a^2x)$. Therefore $x^{\overline{W}}=a^2a^{\overline{W}_m},$ as asserted. \end{proof}

Next we study additive property and reverse order law of the $m$-weak group inverse. The following lemma is crucial.

\begin{lem} Let $a\in R$. Then $a\in R^{\overline{W}_m}$ if and only if $a^m\in R^{\overline{W}}.$ In this case, $a^{\overline{W}_m}=a^{m-1}(a^m)^{\overline{W}}.$
\end{lem}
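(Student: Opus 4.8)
The plan is to prove both implications by passing between the defining equations of $a^{\overline{W}_m}$ and those of $(a^m)^{\overline{W}}$, exploiting Theorem 2.1 and Corollary 2.1 applied to the element $a^m$, with the weak group inverse read as the case $m=1$ of the $m$-weak group inverse. The whole argument is organized around the single identity $(a^m)^{\overline{W}}=(a^{\overline{W}_m})^m$, from which the stated formula will drop out once one notices that $a^{j}x^{j+1}=x$ holds for all $j$ whenever $ax^2=x$; in particular $a^{m-1}x^m=x$.

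For the forward implication I would set $x=a^{\overline{W}_m}$ and propose $w=x^m$ as the weak group inverse of $a^m$. Verifying $a^m w^2=w$ is immediate from $a^{j}x^{j+1}=x$ (take $j=m$), and the absorption $a^{mp}=w\,a^{m(p+1)}$ follows from $xa^{j}=a^{j-1}$ for large $j$, a consequence of $xa^{k+1}=a^k$. The only substantial point is the Hermitian equation $\big((a^m)^*a^{2m}w\big)^*=(a^m)^*a^{2m}w$. Here I would first use Corollary 2.1(3), $ax=a^m x^m$, to collapse $a^{2m}x^m=a^m(a^m x^m)=a^{m+1}x$, so that $(a^m)^*a^{2m}w=(a^m)^*a^{m+1}x$; then I would invoke the $m$-weak group decomposition $a=a_1+a_2$ of Theorem 2.1(2), for which $a^{\overline{W}_m}=a_1^{\#}$, to compute $a^{m+1}x=a_1^m$ and, writing $v=a^m-a_1^m$, to read off $(a_1^m)^*v=(a_1^{m-1})^*\big(a_1^*a^{m-1}a_2\big)=0$, hence $v^*a_1^m=0$. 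This gives $(a^m)^*a^{m+1}x=(a_1^m+v)^*a_1^m=(a_1^m)^*a_1^m$, which is manifestly self-adjoint. Uniqueness of the weak group inverse then yields $w=(a^m)^{\overline{W}}$, and $a^{m-1}w=a^{m-1}x^m=x$ is exactly the asserted formula.

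For the converse, $a^m\in R^{\overline{W}}$ forces $a^m\in R^{D}$, hence $a\in R^{D}$, so Theorem 2.1 is available through its condition (3). Writing $w=(a^m)^{\overline{W}}$ and $x=a^{m-1}w$, I would verify the three requirements of (3). The absorption $a^n=xa^{n+1}$ is inherited from $a^{mp}=w\,a^{m(p+1)}$ by left multiplication by $a^{m-1}$. For the middle equation I would start from the condition $w$ satisfies as the $1$-weak group inverse of $a^m$, namely $\big((a^m)^D\big)^*a^{2m}w=\big((a^m)^D\big)^*a^m$ with $(a^m)^D=(a^D)^m$, and multiply on the left by $(a^{m-1})^*$; since $(a^{m-1})^*\big((a^D)^m\big)^*=\big((a^D)^m a^{m-1}\big)^*=(a^D)^*$ (because $a^D$ commutes with $a$, giving $(a^D)^m a^{m-1}=a^D$) and $a^{2m}w=a^{m+1}x$, this produces exactly $(a^D)^*a^{m+1}x=(a^D)^*a^m$.

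The hard part will be the remaining equation $ax^2=x$, and it is genuinely the crux because $w$ a priori "sees" only $a^m$, whereas $ax^2=x$ mixes single powers of $a$. I would reduce it to $ex=x$ for the idempotent $e=a^m w$ (idempotency from $w=wa^mw$, Corollary 2.1(1)), since $ax^2=a^m w\,a^{m-1}w=e\,x$. The idea is then to replace $e$ by the Drazin idempotent $e_D=aa^D$, which commutes with $a$: I would prove $eR=e_DR$ by the sandwich $a^{m(p+1)}R\subseteq eR\subseteq a^mR\subseteq e_DR$ together with $e_DR=a^{m(p+1)}R$ once $m(p+1)$ exceeds the index of $a$. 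Equal ranges of the two idempotents give $e\,e_D=e_D$ and $e_D\,e=e$; and because $w=a^mw^2\in a^mR\subseteq e_DR$ while $e_D$ commutes with $a^{m-1}$, one has $x=a^{m-1}w=e_Dx$, whence $ex=e\,e_Dx=e_Dx=x$. This settles $ax^2=x$, so $x$ meets condition (3) and therefore $x=a^{\overline{W}_m}=a^{m-1}(a^m)^{\overline{W}}$ by uniqueness of the $m$-weak group inverse.
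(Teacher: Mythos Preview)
The paper does not prove this lemma at all: it simply cites \cite{23}, Proposition~4.6. Your proposal is therefore a genuinely different contribution, supplying a self-contained argument inside the present framework (Theorem~2.1, Corollary~2.2) rather than deferring to an external source. The forward direction, passing through the $m$-weak group decomposition $a=a_{1}+a_{2}$ to obtain $(a^{m})^{*}a^{m+1}x=(a_{1}^{m})^{*}a_{1}^{m}$, is clean and correct; the key observation $a^{m-1}a_{2}=a^{m}-a_{1}^{m}$ is exactly what makes $a_{1}^{*}a^{m-1}a_{2}=0$ do its work.

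There is, however, one small but real gap in the converse. In your sandwich
\[
a^{m(p+1)}R\ \subseteq\ eR\ \subseteq\ a^{m}R\ \subseteq\ e_{D}R
\]
and again in the line ``$w=a^{m}w^{2}\in a^{m}R\subseteq e_{D}R$'', the inclusion $a^{m}R\subseteq e_{D}R$ amounts to $aa^{D}\cdot a^{m}=a^{m}$, which holds only when $m\geq\operatorname{ind}(a)$. For smaller $m$ it can fail. The repair is immediate and costs nothing structurally: iterate $w=a^{m}w^{2}$ to write $e=a^{m}w=a^{m(k+1)}w^{k+1}$ and $w=a^{m k}w^{k+1}$ for any $k$, then choose $k$ with $m k\geq\operatorname{ind}(a)$; now $e_{D}a^{m(k+1)}=a^{m(k+1)}$ and $e_{D}a^{mk}=a^{mk}$ give $e_{D}e=e$ and $e_{D}w=w$ directly, bypassing $a^{m}R$ altogether. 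With this adjustment the rest of your argument ($eR=e_{D}R$, $x=e_{D}x$, hence $ex=x$ and $ax^{2}=x$) goes through unchanged, and condition~(3) of Theorem~2.1 indeed pins down $x=a^{\overline{W}_{m}}$.
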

\begin{proof} See~[23, Proposition 4.6].\end{proof}

\begin{lem} Let $a, b\in R^{\overline{W}}$. If $ab=0, ba=0$ and $a^*b=0$, then $a+b\in R^{\overline{W}}$. In this case, $$(a+b)^{\overline{W}}=a^{\overline{W}}+b^{\overline{W}}.$$
\end{lem}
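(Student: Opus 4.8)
The plan is to verify directly that $x := a^{\overline{W}} + b^{\overline{W}}$ satisfies the three defining equations of the weak group inverse, namely $a x^2 = x$ (with $a$ replaced by $a+b$), the Hermitian condition $\bigl((a+b)^*(a+b)^2 x\bigr)^* = (a+b)^*(a+b)^2 x$, and $(a+b)^n = x(a+b)^{n+1}$ for some $n$. Throughout I would exploit the three hypotheses $ab = 0$, $ba = 0$, and $a^*b = 0$ repeatedly, together with the standard consequences of weak group invertibility that follow from the Corollary~2.2--type identities: in particular $a^{\overline{W}} = a^{\overline{W}} a\, a^{\overline{W}}$, that $a^{\overline{W}}$ lies in the principal idempotent generated by $a$, and that $a^{\overline{W}}$ is expressible through powers of $a$ (so that $b\, a^{\overline{W}} = 0$ and $a^{\overline{W}} b = 0$ will follow from $ba = 0$ and $ab = 0$).

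First I would establish the cross-vanishing relations $a^{\overline{W}} b = 0$, $b\, a^{\overline{W}} = 0$, $b^{\overline{W}} a = 0$, and $a\, b^{\overline{W}} = 0$. Since $a^{\overline{W}} = a a^{\overline{W}} a^{\overline{W}} = \cdots = a^{n}(a^{\overline{W}})^{n+1}$ can be written with a left factor that is a power of $a$, the relation $ab = 0$ gives $a^{\overline{W}} b = 0$; similarly $ba = 0$ yields $b\, a^{\overline{W}} = 0$, and symmetrically for $b^{\overline{W}}$. Granting these, the computation $(a+b) x^2 = (a+b)(a^{\overline{W}} + b^{\overline{W}})^2$ collapses: the cross terms drop out, leaving $a(a^{\overline{W}})^2 + b(b^{\overline{W}})^2 = a^{\overline{W}} + b^{\overline{W}} = x$. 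The Drazin-type equation $(a+b)^n = x(a+b)^{n+1}$ is handled the same way: because $ab = ba = 0$ we have $(a+b)^n = a^n + b^n$ for $n \ge 1$, and choosing $n$ to exceed both indices lets $x(a+b)^{n+1} = a^{\overline{W}} a^{n+1} + b^{\overline{W}} b^{n+1} = a^n + b^n$ by the corresponding equation for $a$ and $b$ separately.

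The step I expect to be the main obstacle is the Hermitian (symmetry) condition, since this is where the hypothesis $a^*b = 0$ must do its work and where the involution interacts with the cross terms. I would expand $(a+b)^*(a+b)^2 x$ and isolate the purely diagonal pieces $a^* a^2 a^{\overline{W}}$ and $b^* b^2 b^{\overline{W}}$, each of which is self-adjoint by the weak group inverse property of $a$ and of $b$ respectively. The remaining mixed terms involve factors of the form $a^* b$, $b^* a$, $a^{\overline{W}} b$, etc.; the identity $a^* b = 0$ kills the terms beginning with $a^* b$ and, upon taking adjoints, the term $b^* a = (a^* b)^* = 0$ disposes of the conjugate mixed terms, while $ab = ba = 0$ and the cross-vanishing relations from the first step eliminate any surviving products through $x$. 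After this cancellation $(a+b)^*(a+b)^2 x$ reduces to the sum of two self-adjoint summands with no interaction, hence is itself self-adjoint. Assembling the three verified equations and invoking the uniqueness of the weak group inverse then gives $(a+b)^{\overline{W}} = a^{\overline{W}} + b^{\overline{W}}$, as claimed.
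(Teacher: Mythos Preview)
The paper does not supply its own argument for this lemma; it simply quotes the result from~[21, Theorem~5.3]. So your plan of a direct verification is perfectly reasonable and is in fact the standard route. The outline for the first equation $(a+b)x^2=x$, for the symmetry of $(a+b)^*(a+b)^2x$, and for the Drazin--type equation $(a+b)^n=x(a+b)^{n+1}$ is sound \emph{once} the four cross--vanishing relations are in hand.

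However, your justification of $a^{\overline{W}}b=0$ (and, symmetrically, $b^{\overline{W}}a=0$) is not correct as written. You express $a^{\overline{W}}=a^{n}(a^{\overline{W}})^{n+1}$ with a power of $a$ on the \emph{left} and then invoke $ab=0$; but $a^{\overline{W}}b=a^{n}(a^{\overline{W}})^{n+1}b$ has $b$ on the right of $(a^{\overline{W}})^{n+1}$, and $ab=0$ says nothing about that product. The left--factor form only yields $b\,a^{\overline{W}}=0$ (from $ba=0$) and $a\,b^{\overline{W}}=0$ (from $ab=0$); the remaining two relations need a separate argument. One clean fix, which is where the \emph{proper} involution and the Hermitian defining relation actually enter, is the following. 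With $p:=aa^{\overline{W}}$ the identity $(a^*a^2a^{\overline{W}})^*=a^*a^2a^{\overline{W}}$ reads $a^*ap=p^*a^*a$; hence
\[
a^*a\,(pb)=p^*a^*(ab)=0,
\]
and since $(apb)^*(apb)=(pb)^*a^*a(pb)=0$, properness gives $apb=a^2a^{\overline{W}}b=0$. Then $aa^{\overline{W}}b=a^{\overline{W}}(a^2a^{\overline{W}}b)=0$ by Corollary~2.2(2), and finally $a^{\overline{W}}b=a^{\overline{W}}(aa^{\overline{W}}b)=0$. The relation $b^{\overline{W}}a=0$ follows in the same way from $ba=0$. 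With this repair your three verifications go through exactly as you describe.
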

\begin{proof} See~[21, Theorem 5.3].\end{proof}

\begin{thm} Let $a,b \in R^{\overline{W}_m}$. If $ab=0, ba=0$ and $a^*b=0$, then $a+b\in R^{\overline{W}_m}$. In this case, $$(a+b)^{\overline{W}_m}=a^{\overline{W}_m}+b^{\overline{W}_m}.$$\end{thm}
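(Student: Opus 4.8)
The plan is to reduce the $m$-weak group statement to the weak-group additive result (Lemma 2.8) via the power reduction of Lemma 2.7. Since $a,b\in R^{\overline{W}_m}$, Lemma 2.7 gives $a^m,b^m\in R^{\overline{W}}$, and the whole problem becomes: show $(a+b)^m\in R^{\overline{W}}$, identify $((a+b)^m)^{\overline{W}}$, and then feed this back through the formula $(a+b)^{\overline{W}_m}=(a+b)^{m-1}((a+b)^m)^{\overline{W}}$ supplied by the same lemma.

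First I would record the elementary consequence of $ab=ba=0$ that no cross terms survive in powers: $(a+b)^k=a^k+b^k$ for every $k\ge 1$, proved by a one-line induction using $a^{k-1}(ab)=0$ and $b^{k-1}(ba)=0$. In particular $(a+b)^m=a^m+b^m$ and, for $m\ge 2$, also $(a+b)^{m-1}=a^{m-1}+b^{m-1}$.

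The next step is to check that the hypotheses of Lemma 2.8 hold for the pair $a^m,b^m$. From $ab=0$ we get $a^mb^m=a^{m-1}(ab)b^{m-1}=0$, and from $ba=0$ we get $b^ma^m=0$; the delicate one is $(a^m)^*b^m=0$, which I would obtain by writing $(a^m)^*=(a^*)^m$ and grouping $(a^*)^mb^m=(a^*)^{m-1}(a^*b)b^{m-1}=0$ using $a^*b=0$. Lemma 2.8 then yields $a^m+b^m\in R^{\overline{W}}$ together with $(a^m+b^m)^{\overline{W}}=(a^m)^{\overline{W}}+(b^m)^{\overline{W}}$; combined with $(a+b)^m=a^m+b^m$ and the converse direction of Lemma 2.7, this already establishes $a+b\in R^{\overline{W}_m}$.

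Finally I would compute the inverse. For $m=1$ the claim is exactly Lemma 2.8, since Lemma 2.7 identifies $a^{\overline{W}_1}=a^{\overline{W}}$. For $m\ge 2$ I would use $(a+b)^{\overline{W}_m}=(a+b)^{m-1}((a+b)^m)^{\overline{W}}=(a^{m-1}+b^{m-1})[(a^m)^{\overline{W}}+(b^m)^{\overline{W}}]$ and expand. The two diagonal terms are $a^{m-1}(a^m)^{\overline{W}}=a^{\overline{W}_m}$ and $b^{m-1}(b^m)^{\overline{W}}=b^{\overline{W}_m}$ by Lemma 2.7. The main obstacle is showing the two cross terms vanish; here I would exploit that a weak group inverse is a left multiple of its element: writing $y=(b^m)^{\overline{W}}$, the defining relation $b^my^2=y$ gives $a^{m-1}y=a^{m-1}b^my^2=(a^{m-1}b^m)y^2=0$, since $a^{m-1}b^m=a^{m-2}(ab)b^{m-1}=0$, and symmetrically $b^{m-1}(a^m)^{\overline{W}}=0$. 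This collapses the expansion to $a^{\overline{W}_m}+b^{\overline{W}_m}$, completing the proof.
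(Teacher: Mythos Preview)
Your proof is correct and follows essentially the same route as the paper's: reduce to the weak-group case via Lemma~2.7, verify the hypotheses of Lemma~2.8 for $a^m,b^m$, and then unwind the formula $(a+b)^{\overline{W}_m}=(a+b)^{m-1}((a+b)^m)^{\overline{W}}$. In fact you are more careful than the paper at the last step: the paper passes silently from $(a+b)^{m-1}[(a^m)^{\overline{W}}+(b^m)^{\overline{W}}]$ to $a^{m-1}(a^m)^{\overline{W}}+b^{m-1}(b^m)^{\overline{W}}$, whereas you explicitly justify the vanishing of the cross terms via $y=b^my^2$ and $a^{m-1}b^m=0$.
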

\begin{proof} Since $a,b\in R^{\overline{W}_m}$, by virtue of Lemma 2.7, we have
 $a^m,b^m\in R^{\overline{W}}$. By hypothesis, we easily check that
 $$a^mb^m=0, b^ma^m=0, (a^m)^*b^m=0.$$ According to Lemma 2.8, $$(a+b)^m=a^m+b^m\in R^{\overline{W}}.$$ By using Lemma 2.7,
 $a+b\in R^{\overline{W}_m}$. Moreover, we have $$\begin{array}{rll}
 (a+b)^{\overline{W}_m}&=&(a+b)^{m-1}[(a+b)^m]^{\overline{W}}\\
 &=&(a+b)^{m-1}[a^m+b^m]^{\overline{W}}\\
 &=&(a+b)^{m-1}[(a^m)^{\overline{W}}+(b^m)^{\overline{W}}]\\
 &=&a^{m-1}(a^m)^{\overline{W}}+b^{m-1}(b^m)^{\overline{W}}\\
&=&a^{\overline{W}_m}+b^{\overline{W}_m},
\end{array}$$ as asserted.\end{proof}

\begin{lem} Let $a,b\in R^{\overline{W}}$. If $ab=ba, a^*b=ba^*$, then $ab\in R^{\overline{W}}$. In this case,
$$(ab)^{\overline{W}}=a^{\overline{W}}b^{\overline{W}}=b^{\overline{W}}a^{\overline{W}}.$$\end{lem}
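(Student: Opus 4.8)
The plan is to set $x=a^{\overline{W}}$ and $y=b^{\overline{W}}$ and to prove that $z=xy=yx$ is the weak group inverse of $ab$; uniqueness of the weak group inverse then yields both asserted identities at once. First I would record the commutativity forced by the hypotheses. Applying the involution to $ab=ba$ gives $b^{*}a^{*}=a^{*}b^{*}$, and applying it to $a^{*}b=ba^{*}$ gives $b^{*}a=ab^{*}$; together with the two given relations this shows that $a,a^{*},b,b^{*}$ commute pairwise. The subsequent verification uses only the defining equations $ax^{2}=x$, $xa^{k+1}=a^{k}$, $(a^{*}a^{2}x)^{*}=a^{*}a^{2}x$ (and their $b$-analogues for $y$), together with commutativity relations between the inverses and the data.

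The heart of the argument, and the step I expect to be the main obstacle, is to show that each inverse commutes with the data of the other element and that the two inverses commute: namely that $x$ commutes with $b$ and $b^{*}$, that $y$ commutes with $a$ and $a^{*}$, and consequently that $xy=yx$. The clean route is through the core-EP inverse: from the defining equations one has the representation $a^{\overline{W}}=(a^{\overline{D}})^{2}a$, and the core-EP inverse lies in the double commutant of $\{a,a^{*}\}$, i.e. it commutes with every element commuting with both $a$ and $a^{*}$. Since $b$ and $b^{*}$ each commute with both $a$ and $a^{*}$, they commute with $a^{\overline{D}}$ and hence with $x=(a^{\overline{D}})^{2}a$; symmetrically $y$ commutes with $a$ and $a^{*}$. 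Finally, because $y$ belongs to the double commutant of $\{b,b^{*}\}$ while $x$ commutes with both $b$ and $b^{*}$, we obtain $xy=yx$. This double-commutant property is the analogue for the core-EP inverse of the classical fact that $a^{D}$ commutes with everything commuting with $a$, and it is exactly where the hypothesis $a^{*}b=ba^{*}$ is essential, since it forces $b$ to see $a^{*}$ as well as $a$.

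With these commutations in hand the three defining equations for $z=xy$ reduce to routine regroupings. For $ab\,z^{2}=z$ one moves the commuting factors past one another to reach $(ax^{2})(by^{2})=xy=z$. For the Drazin-type equation one chooses $k$ larger than both indices and computes $z(ab)^{k+1}=xy\,a^{k+1}b^{k+1}=(xa^{k+1})(yb^{k+1})=a^{k}b^{k}=(ab)^{k}$, using $ab=ba$ and that $y$ commutes with $a$. The symmetry condition is the only one requiring care: using $(ab)^{*}=b^{*}a^{*}$ and $(ab)^{2}=a^{2}b^{2}$, one rearranges $(ab)^{*}(ab)^{2}z$ into the product $uv$, where $u=a^{*}a^{2}x$ and $v=b^{*}b^{2}y$ are the Hermitian elements furnished by the weak group invertibility of $a$ and $b$. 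Since every factor of $u$ commutes with every factor of $v$, the elements $u$ and $v$ commute, whence $(uv)^{*}=v^{*}u^{*}=vu=uv$, so $(ab)^{*}(ab)^{2}z$ is Hermitian. Thus $z$ satisfies all three equations, giving $ab\in R^{\overline{W}}$ and $(ab)^{\overline{W}}=xy=yx$, as asserted.
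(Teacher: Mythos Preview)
The paper does not prove this lemma itself; it simply cites [21, Theorem~5.2]. Your proposal reconstructs the argument behind that citation, and the line of reasoning is correct. The decisive step you isolate---that $a^{\overline{W}}$ commutes with any element commuting with both $a$ and $a^{*}$---is precisely [21, Lemma~5.1], which the present paper itself invokes in the proof of Theorem~2.11; your route to it via the double-commutant property of $a^{\overline{D}}$ together with the representation $a^{\overline{W}}=(a^{\overline{D}})^{2}a$ is the standard one. The one point worth flagging is that this route tacitly uses $R^{\overline{W}}\subseteq R^{\overline{D}}$, which is true in a ring with proper involution but is itself a result from [20, 21] rather than something immediate from the definitions. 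Once the commutations $xb=bx$, $xb^{*}=b^{*}x$, $ya=ay$, $ya^{*}=a^{*}y$, $xy=yx$ are in hand, your verification of the three defining equations for $z=xy$ is routine and correct, including the symmetry step where you factor $(ab)^{*}(ab)^{2}z$ as the product of the two commuting Hermitian elements $a^{*}a^{2}x$ and $b^{*}b^{2}y$.
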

\begin{proof} See~[21, Theorem 5.2].\end{proof}

\begin{thm} Let $a,b\in R^{\overline{W}_m}$. If $ab=ba, a^*b=ba^*$, then $ab\in R^{\overline{W}_m}$. In this case,
$$(ab)^{\overline{W}_m}=a^{\overline{W}_m}b^{\overline{W}_m}=b^{\overline{W}_m}a^{\overline{W}_m}.$$\end{thm}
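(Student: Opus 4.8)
The plan is to follow the template of the proof of Theorem 2.9, with the additive Lemma 2.8 replaced by the multiplicative Lemma 2.10, and to transfer everything to the weak group inverse of the $m$-th powers by means of Lemma 2.7. First I would pass to $a^m$ and $b^m$: since $a,b\in R^{\overline{W}_m}$, Lemma 2.7 gives $a^m,b^m\in R^{\overline{W}}$, together with the representations $a^{\overline{W}_m}=a^{m-1}(a^m)^{\overline{W}}$ and $b^{\overline{W}_m}=b^{m-1}(b^m)^{\overline{W}}$. Next I would verify that the pair $a^m,b^m$ satisfies the hypotheses of Lemma 2.10. The commutativity $a^mb^m=b^ma^m$ is immediate from $ab=ba$. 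For the $*$-commutativity, the relation $a^*b=ba^*$ says that $a^*$ commutes with $b$, hence $a^*$ commutes with $b^m$, and then $(a^*)^m$ commutes with $b^m$; since $(a^m)^*=(a^*)^m$, this yields $(a^m)^*b^m=b^m(a^m)^*$. Lemma 2.10 then produces $a^mb^m\in R^{\overline{W}}$ with $(a^mb^m)^{\overline{W}}=(a^m)^{\overline{W}}(b^m)^{\overline{W}}=(b^m)^{\overline{W}}(a^m)^{\overline{W}}$. Because $ab=ba$ we have $(ab)^m=a^mb^m$, so $(ab)^m\in R^{\overline{W}}$, and the converse direction of Lemma 2.7 delivers $ab\in R^{\overline{W}_m}$.

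It then remains to identify the inverse. Applying Lemma 2.7 to $ab$ and invoking Lemma 2.10, I would write
$$(ab)^{\overline{W}_m}=(ab)^{m-1}\big[(ab)^m\big]^{\overline{W}}=a^{m-1}b^{m-1}(a^m)^{\overline{W}}(b^m)^{\overline{W}},$$
where the last equality uses $(ab)^{m-1}=a^{m-1}b^{m-1}$. On the other hand, the representations from Lemma 2.7 give $a^{\overline{W}_m}b^{\overline{W}_m}=a^{m-1}(a^m)^{\overline{W}}b^{m-1}(b^m)^{\overline{W}}$. Hence the two expressions coincide precisely when $b^{m-1}$ commutes with $(a^m)^{\overline{W}}$, and the symmetric formula $b^{\overline{W}_m}a^{\overline{W}_m}=(ab)^{\overline{W}_m}$ will follow in the same manner once $a^{m-1}$ commutes with $(b^m)^{\overline{W}}$.

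I expect this commutation to be the main obstacle. The hypotheses $ab=ba$ and $a^*b=ba^*$ already show that $b^{m-1}$ commutes with both $a^m$ and $(a^m)^*$ (and symmetrically for $a^{m-1}$), so what is genuinely required is a commutant property of the weak group inverse: any element commuting with $d$ and with $d^*$ must commute with $d^{\overline{W}}$. I would establish this as a separate lemma and then feed it in. One route is to express $d^{\overline{W}}$ through the Drazin inverse $d^{D}$, which lies in the bicommutant of $d$, together with a Hermitian idempotent canonically determined by $d$ and $d^*$, which any element commuting with $d$ and $d^*$ must also commute with; the commutation of $b^{m-1}$ with $(a^m)^{\overline{W}}$ then drops out. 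An alternative is to argue directly from the uniqueness of the weak group inverse, checking that the element obtained after transporting $(a^m)^{\overline{W}}$ past $b^{m-1}$ still satisfies the three defining equations for $d=a^m$, so that by uniqueness it must equal $(a^m)^{\overline{W}}$. Granting this commutant property, the rearrangement in the previous paragraph is immediate, and both product formulas $(ab)^{\overline{W}_m}=a^{\overline{W}_m}b^{\overline{W}_m}=b^{\overline{W}_m}a^{\overline{W}_m}$ follow.
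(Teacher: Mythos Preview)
Your proposal is correct and follows essentially the same route as the paper: reduce to the weak group inverse of the $m$-th powers via Lemma~2.7, apply Lemma~2.10 to $a^m,b^m$, and then rearrange using a commutant property of the weak group inverse. The commutation lemma you single out as the main obstacle is precisely what the paper invokes as an external result (\cite[Lemma~5.1]{21}): from $ba^m=a^mb$ and $b^*a^m=a^mb^*$ one concludes $b(a^m)^{\overline{W}}=(a^m)^{\overline{W}}b$, and hence $b^{m-1}$ commutes with $(a^m)^{\overline{W}}$ as you need.
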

\begin{proof} In view of Lemma 2.7, $a^m, b^m\in R^{\overline{W}}$.
Since $ab=ba, a^*b=ba^*$, we see that $a^mb^m=b^ma^m, (a^m)^*b^m=b^m(a^m)^*$. In view of Lemma 2.10,
$(ab)^m=a^mb^m\in \in R^{\overline{W}}$. By virtue of Lemma 2.7 again, $ab\in R^{\overline{W}_m}$.
Since $ba^m=a^mb$ and $b^*a^m=a^mb^*$, it follows by ~[21, Lemma 5.1] that
$b(a^m)^{\overline{W}}=(a^m)^{\overline{W}}b$.
Therefore we have $$\begin{array}{rll}
(ab)^{\overline{W}_m}&=&(ab)^{m-1}[(ab)^m]^{\overline{W}}\\
&=&(ab)^{m-1}[a^mb^m]^{\overline{W}}\\
&=&(ab)^{m-1}(a^m)^{\overline{W}}(b^m)^{\overline{W}}\\
&=&[a^{m-1}(a^m)^{\overline{W}}][b^{m-1}(b^m)^{\overline{W}}]\\
&=&a^{\overline{W}_m}b^{\overline{W}_m}.
\end{array}$$ Likewise, $(ab)^{\overline{W}_m}=b^{\overline{W}_m}a^{\overline{W}_m}$. This completes the proof.\end{proof}

\section{Equivalent characterizations}

In this section we establish various equivalent characterizations of $m$-weak group inverse. We now derive

\begin{thm} Let $a\in R$. Then $a\in R^{\overline{W}_m}$ if and only if\end{thm}
\begin{enumerate}
\item [(1)] $a\in R^D$;
\vspace{-.5mm}
\item [(2)] There exists $x\in R$ such that $$ax^2=x, [(a^m)^*a^{m+1}x]^*=(a^m)^*a^{m+1}x, a^n=axa^n$$ for some $n\in {\Bbb N}$.
\end{enumerate}
In this case, $a^{\overline{W}_m}=aa^Dx$.
\begin{proof} $\Longrightarrow $ In view of Theorem 2.1, $a\in R^D$. By hypothesis, there exists $x\in R$ such that
$$x=ax^2, [(a^m)^*a^{m+1}x]^*=(a^m)^*a^{m+1}x, a^n=xa^{n+1}~\mbox{for some} ~n\in {\Bbb N}.$$
Then $axa^n=(ax^2)a^{n+1}=xa^{n+1}=a^n$, as desired.

$\Longleftarrow $ By hypothesis, there exists some $z\in R$ such that
such that $$az^2=z, [(a^m)^*a^{m+1}z]^*=(a^m)^*a^{m+1}z, a^n=aza^n$$ for some $n\in {\Bbb N}$. Let $x=aa^Dz$.
We claim that $a^{\overline{W}_m}=x$. Let $k=ind(a)$. Then one directly verifies that
$$ax=a^Da(az)=a^Da(a^kz^k)=(a^Da^{k+1})z^k=a^kz^k=az.$$ This implies that $$[(a^m)^*a^{m+1}x]^*=[(a^m)^*a^{m+1}z]^*=(a^m)^*a^{m+1}z=(a^m)^*a^{m+1}x.$$
Moreover, we have
$$\begin{array}{rll}
x-ax^2&=&aa^Dz-(ax)x=aa^Dz-(az)aa^Dz\\
&=&(aa^Dz-az^2)+az(1-aa^D)z=(a^2a^Dz^2-az^2)\\
&+&az(1-aa^D)z=(aa^D-1)az^2+az(1-aa^D)z\\
&=&(az-1)(1-aa^D)z.
\end{array}$$ Since $z=az^2$, by induction, $z=z^kz^{k+1}$. Then
$x-ax^2=(az-1)(1-aa^D)a^kz^{k+1}=0$, and so $ax^2=x$.

Furthermore, $$\begin{array}{rll}
xa^{n+k+1}&=&aa^Dza^{n+k+1}=a^D(az)a^{n+k+1}=a^D(aza^n)a^{k+1}\\
&=&a^Da^na^{k+1}=[a^Da^{k+1}]a^n=a^{n+k},
\end{array}$$ as required.\end{proof}

\begin{cor} Let $A\in {\Bbb C}^{n\times n}$. Then $A^{\overline{W}_m}=X$ if and only if $$AX^2=X, [(A^m)^*A^{m+1}X]^*=(A^m)^*A^{m+1}X,
A^k=AXA^k$$ for some $k\in {\Bbb N}$.\end{cor}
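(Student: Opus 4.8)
The plan is to obtain this as a straight specialization of Theorem 3.1 to the matrix setting, after one structural observation. Since $A \in {\Bbb C}^{n\times n}$ lives in a finite-dimensional algebra, $A$ is automatically Drazin invertible, so $A \in R^D$ and hypothesis (1) of Theorem 3.1 holds for free. Moreover, the three displayed equations are, after relabelling $a\to A$, $x\to X$, $n\to k$, \emph{exactly} condition (2) of Theorem 3.1. Hence the corollary will follow once I reconcile the statement ``$X$ satisfies the three equations'' with the statement ``$X = A^{\overline{W}_m}$''; the only gap is that Theorem 3.1 delivers the $m$-weak group inverse in the form $A^{\overline{W}_m} = AA^D x$ rather than as $x$ itself.

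The bridge I would prove first is the auxiliary identity: if $A \in R^D$ and $AX^2 = X$, then $AA^D X = X$. Iterating $AX^2 = X$ gives $X = A^j X^{j+1}$ for every $j \geq 1$ by a one-line induction, and for $j \geq ind(A)$ one has $AA^D A^j = A^j$. Substituting, $AA^D X = AA^D A^j X^{j+1} = A^j X^{j+1} = X$. This is the only genuine computation in the argument, and it is completely routine.

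With this in hand both directions are immediate. For $(\Leftarrow)$ I would assume the three equations for $X$; Drazin invertibility of $A$ supplies hypothesis (1), so Theorem 3.1 applies with $x = X$ and yields $A \in R^{\overline{W}_m}$ together with $A^{\overline{W}_m} = AA^D X$, which the auxiliary identity collapses to $A^{\overline{W}_m} = X$. For $(\Rightarrow)$ I would assume $X = A^{\overline{W}_m}$; then $A \in R^{\overline{W}_m}$, so Theorem 3.1 produces some $x$ satisfying the three equations with $A^{\overline{W}_m} = AA^D x$. Since this $x$ obeys $Ax^2 = x$, the auxiliary identity gives $AA^D x = x$, hence $x = A^{\overline{W}_m} = X$, and therefore $X$ itself satisfies the three equations.

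I do not expect any real obstacle here: the entire content is the recognition that the three displayed equations coincide with condition (2) of Theorem 3.1 and that complex matrices are automatically Drazin invertible, together with the short reduction $AA^D X = X$. The mildest point of care is the index bookkeeping — confirming that the auxiliary identity may use any exponent $j \geq ind(A)$ independently of the $k$ appearing in $A^k = AXA^k$, so that no compatibility between the two is required.
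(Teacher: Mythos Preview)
Your proposal is correct and follows the paper's approach: the paper's proof is the single line ``This is immediate by Theorem 3.1,'' and you have simply unpacked what that immediacy entails, namely that complex matrices are automatically Drazin invertible and that the formula $A^{\overline{W}_m}=AA^DX$ collapses to $X$ via the routine identity $AA^DX=X$ (obtained from $X=A^jX^{j+1}$ with $j\geq ind(A)$). The only remark is that for the $(\Rightarrow)$ direction one can be slightly more direct: the forward part of the proof of Theorem 3.1 actually takes $x=a^{\overline{W}_m}$ itself as the witness for condition (2), so $X=A^{\overline{W}_m}$ satisfies the three equations without passing through the auxiliary identity --- but your argument via $AA^Dx=x$ is equally valid and is what the bare statement of Theorem 3.1 licenses.
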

\begin{proof} This is immediate by Theorem 3.1.\end{proof}

We are ready to prove:

\begin{thm} Let $a\in R$. Then $a\in R^{\overline{W}_m}$ if and only if\end{thm}
\begin{enumerate}
\item [(1)] $a\in R^D$;
\vspace{-.5mm}
\item [(2)] There exists $x\in R$ such that $$xR=a^DR, [(a^m)^*a^{m+1}x]^*=(a^m)^*a^{m+1}x, a^n=axa^{n}~\mbox{for some} n\in {\Bbb N}.$$
In this case, $a^{\overline{W}_m}=x$.\end{enumerate}
\begin{proof} $\Longrightarrow $  In view of Theorem 2.1, $a$ has the $m$-weak group decomposition $a=a_1+a_2$, where $a_1^*a^{m-1}a_2=a_2a_1=0, a_1\in R^{\#}, a_2\in R^{nil}.$ Let $x=a_1^{\#}$. Moreover, we have $$x=ax^2, [(a^m)^*a^{m+1}x]^*=(a^m)^*a^{m+1}x, a^n=xa^{n+1}~\mbox{for some} ~n\in {\Bbb N}.$$
Then it will suffice to prove that $xR=a^DR$.

Since $a_2a_1=0, a_1a_1^{\pi}=0$ and $a_2^D=0$, it follows by ~[9, Theorem 2.1] that $a\in R^D$ and $$a^D=a_1^{\#}+\sum\limits_{n=1}^{k-1
}(a_1^{\#})^{n+1}a_2^n,$$ where $k=ind(a_2)$.
It is easy to verify that $$\begin{array}{rll}
aa^Dx&=&a^D(a_1+a_2)a_1^{\#}=a^Da_1a_1^{\#}\\
&=&[a_1^{\#}+\sum\limits_{n=1}^{k-1}(a_1^{\#})^{n+1}a_2^n
]a_1a_1^{\#}\\
&=&a_1^{\#}(a_1a_1^{\#})=x.
\end{array}$$ Moreover, we have
$$\begin{array}{rll}
xaa^D&=&a_1^{\#}(a_1+a_2)[a_1^{\#}+\sum\limits_{n=1}^{k-1}(a_1^{\#})^{n+1}a_2^n]\\
&=&a_1^{\#}
a_1[a_1^{\#}+\sum\limits_{n=1}^{k-1}(a_1^{\#})^{n+1}a_2^n]\\
&=&a_1^{\#}+\sum\limits_{n=1}^{k-1}(a_1^{\#})^{n+1}a_2^n\\
&=&a^D.
\end{array}$$
Therefore $xR=a^DR$, as desired.

$\Longleftarrow $ By hypothesis, we have
$$axa^D=a[xa^{n}](a^D)^{n+1}=(axa^n)(a^D)^{n+1}=a^n(a^D)^{n+1}=a^D.$$ Hence,
$(1-ax)a^D=0$. Since $xR\subseteq a^DR$, we deduce that $(1-ax)x=0$, and so $x=ax^2$. According to Theorem 2.1, $a\in R^{\overline{W}_m}$. In this case, $a^{\overline{W}_m}=x$, as asserted.
\end{proof}

\begin{cor} Let $A\in {\Bbb C}^{n\times n}$. Then $A^{\overline{W}_m}=X$ if and only if there exists $k\in {\Bbb N}$ such that $$\mathcal{R}(X)=\mathcal{R}(A^D), [(A^m)^*A^{m+1}X]^*=(A^m)^*A^{m+1}X, A^k=AXA^{k}.$$\end{cor}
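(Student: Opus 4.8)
The plan is to specialize Theorem 3.3 to the ring $R={\Bbb C}^{n\times n}$ equipped with the conjugate transpose as its proper involution, so that the corollary becomes essentially a matrix-language restatement of that theorem. Two points must be reconciled before Theorem 3.3 applies. First, condition (1) of Theorem 3.3, namely that $A\in R^{D}$, is automatic here, since every square complex matrix is Drazin invertible. Hence only the three displayed equations in the corollary need to be matched against the conditions of Theorem 3.3(2).

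The one genuinely non-trivial point is translating the right-ideal condition $xR=a^{D}R$ of Theorem 3.3 into the range-space condition $\mathcal{R}(X)=\mathcal{R}(A^{D})$. I would record the standard dictionary that, for complex matrices, the principal right ideal generated by a matrix $Y$ is $Y{\Bbb C}^{n\times n}$, and that $X{\Bbb C}^{n\times n}=A^{D}{\Bbb C}^{n\times n}$ if and only if $\mathcal{R}(X)=\mathcal{R}(A^{D})$. For the forward implication, an equality of right ideals yields $X=A^{D}B$ and $A^{D}=XC$ for suitable $B,C\in{\Bbb C}^{n\times n}$, whence $\mathcal{R}(X)\subseteq\mathcal{R}(A^{D})$ together with the reverse inclusion, giving equality of ranges. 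For the converse, if the ranges coincide then every column of $X$ lies in $\mathcal{R}(A^{D})$, so $X=A^{D}B$ for some $B$ by solving column by column, and symmetrically $A^{D}=XC$; these two factorizations give the two inclusions of right ideals and hence their equality.

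With this dictionary in hand, the remaining two requirements of Corollary 3.4, namely the Hermitian condition $[(A^{m})^{*}A^{m+1}X]^{*}=(A^{m})^{*}A^{m+1}X$ and the relation $A^{k}=AXA^{k}$, are verbatim the corresponding equations of Theorem 3.3(2), with $k$ playing the role of $n$. Thus $A^{\overline{W}_m}=X$ holds exactly when the three displayed conditions hold for some $k\in{\Bbb N}$, and the statement follows immediately from Theorem 3.3. I do not anticipate any obstacle beyond the right-ideal/range translation isolated above; everything else is a direct reading-off from the theorem.
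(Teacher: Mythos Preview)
Your proposal is correct and follows exactly the paper's approach: the paper's proof reads simply ``This is obvious by Theorem 3.3,'' and you have supplied precisely the two routine observations needed to justify that sentence, namely that every $A\in{\Bbb C}^{n\times n}$ is Drazin invertible and that $X{\Bbb C}^{n\times n}=A^{D}{\Bbb C}^{n\times n}$ is equivalent to $\mathcal{R}(X)=\mathcal{R}(A^{D})$.
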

\begin{proof} This is obvious by Theorem 3.3.\end{proof}

We now characterize the $m$-weak group inverse by using a polar-like property.

\begin{thm} Let $a\in R$. Then $a\in R^{\overline{W}_m}$ if and only if\end{thm}
\begin{enumerate}
\item [(1)] $a\in R^D$;
\vspace{-.5mm}
\item [(2)] There exists an idempotent $p\in R$ such that $$a+p\in \mathcal{A}~\mbox{is right invertible},(a^D)^*a^mp=0~\mbox{and} ~a^DR=(1-p)R.$$
\end{enumerate}
In this case, $a^{\overline{W}_m}=a^D(1-p).$
\begin{proof}  $\Longrightarrow $ Obviously, $a\in R^D$. Let $q=aa^{\overline{W}_m}$. In view of Corollary 2.2, $a^{\overline{W}_m}=a^{\overline{W}_m}aa^{\overline{W}_m}.$ Then $q^2=q$, i.e., $q\in R$ is an idempotent.
Moreover, we have $a^{\overline{W}_m}= .$
One easily verifies that
$$\begin{array}{rll}
(a^m)^*a^mq&=&(a^m)^*a^{m+1}a^{\overline{W}_m}\\
&=&\big((a^m)^*a^{m+1}a^{\overline{W}_m}\big))^*\\
&=&\big((a^m)^*a^mq\big)^*=q^*(a^m)^*a^m.
\end{array}$$
Since $a^n=a^{\overline{W}_m}a^{n+1}$ for some $n\in {\Bbb N}$, we see that
$$qaa^D=aa^{\overline{W}_m}aa^D=a[a^{\overline{W}_m}a^{n+1}](a^D)^{n+1}=a^{n+1}(a^D)^{n+1}=aa^D.$$
Since $a^{\overline{W}_m}=a(a^{\overline{W}_m})^2$, analogously, we prove that
$$aa^Dq=aa^Daa^{\overline{W}_m}=q.$$ Thus we have
$$\begin{array}{rll}
(a^D)^*a^mq&=&[a^m(a^D)^{m+1}]^*a^mq=[(a^D)^{m+1}]^*[(a^m)^*a^mq]\\
&=&[(a^D)^{m+1}]^*[q^*(a^m)^*a^m]=[(a^D)^{m+1}]^*[(a^m)^*a^mq]^*\\
&=&[(a^D)^{m+1}]^*[(a^m)^*a^{m+1}a^Dq]^*\\
&=&[(a^D)^{m+1}]^*[aa^D]^*[a^{m+1}a^Dq]^*a^m\\
&=&[(a^D)^{m+1}]^*[a^{m+1}a^Dqaa^D]^*a^m\\
&=&[(a^D)^{m+1}]^*[a^{m+1}a^Daa^D]^*a^m=(a^D)^*a^m.
\end{array}$$ Thus, $(a^D)^*a^m(1-q)=(a^D)^*a^m-(a^D)^*a^mq=0$.

Let $a=x+y$ be the $m$-weak decomposition of $a$. Then $a^{\overline{W}_m}=x^{\#}$. Hence,
$a[1-aa^{\overline{W}_m}]=(x+y)[1-(x+y)x^{\#}]=(x+y)[1-xx^{\#}]=y$ is nil.
Then we check that
$$\begin{array}{rl}
&[a+1-aa^{\overline{W}_m}][a^{\overline{W}_m}+1-aa^{\overline{W}_m}]\\
=&aa^{\overline{W}_m}+a[1-aa^{\overline{W}_m}]+[1-aa^{\overline{W}_m}]a^{\overline{W}_m}+[1-aa^{\overline{W}_m}]\\
=&1+a[1-aa^{\overline{W}_m}]\\
\in&\mathcal{A}^{-1}.\\
\end{array}$$
Then $a+1-q=a+1-aa^{\overline{W}_m}\in \mathcal{A}$ is right invertible. In view of Theorem 3.3, $a^DR=a^{\overline{W}_m}R=aa^{\overline{W}_m}R=qR$. Choose $p=1-q$. Then $$a+p\in \mathcal{A}~\mbox{is right invertible}, (a^D)^*a^mp=0~\mbox{and} ~a^DR=(1-p)R,$$ as required.

$\Longleftarrow $ By hypothesis, there exists an idempotent $p\in R$ such that
$$a+p\in \mathcal{A}~\mbox{right invertible},(a^D)^*a^mp=0~\mbox{and} ~a^DR=(1-p)R.$$
Set $q=1-p$. Then $$(a^D)^*a^mq=(a^D)^*a^m~\mbox{and} ~a^DR=qR.$$

Since $a^mq\in a^DR$, we can find $x\in R$ such that $a^mq=a^Dx$.
Hence, $(a^D)^*a^Dx=(a^D)^*a^m$.
Then $$\begin{array}{rll}
(aa^D)^*aa^D&=&a^*[(a^D)^*a^m](a^D)^m=a^*[(a^D)^*a^Dx](a^D)^m\\
&=&(aa^D)^*aa^D[a^Dx(a^D)^m].
\end{array}$$ Since the involution $*$ is proper, we derive
that $aa^D=a^Dx(a^D)^m$. Choose $y=(a^D)^{m+2}x$. Then we verify that
$$\begin{array}{rll}
ay^2&=&a(a^D)^{m+2}x(a^D)^{m+2}x=(a^D)^{m+1}x(a^D)^{m+2}x\\
&=&(a^D)^m[a^Dx(a^D)^m](a^D)^2x\\
&=&(a^D)^m(aa^D)(a^D)^2x=(a^D)^{m+2}x=y,\\
(a^D)^*a^{m+1}y&=&(a^D)^*a^{m+1}(a^D)^{m+2}x=(a^D)^*a^Dx=(a^D)^*a^m.\\
\end{array}$$
Set $k=ind(a)$. Then we get $$(a^D)^*[a^Dxa^{k+1}]=[(a^D)^*a^Dx]a^{k+1} =[(a^D)^*a^m]a^{k+1}=(a^D)^*(a^ma^{k+1}).$$
Thus, $$(a^D)^*[a^Dxa^{k+1}]=(a^D)^*[a^Da^{m+1}a^{k+1}].$$
As the involution is proper, we get $a^Dxa^{k+1}=(a^Da^{k+1})a^{m+1}=a^ma^{k+1}.$ Accordingly, we deduce that
$$\begin{array}{rll}
ya^{k+1}&=&[(a^D)^{m+2}x]a^{k+1}=(a^D)^{m+1}[a^Dxa^{k+1}]\\
&=&(a^D)^{m+1}[a^ma^{k+1}]=a^Da^{k+1}=a^k.
\end{array}$$ In light of Theorem 2.1, $a\in R^{\overline{W}_m}$, as asserted.\end{proof}

\begin{cor} Let $a\in R$. Then $a\in R^{\overline{W}}$ if and only if\end{cor}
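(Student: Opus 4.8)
The plan is to recognize this corollary as nothing more than the specialization of Theorem 3.5 to the case $m=1$, together with the identification of the $1$-weak group inverse with the weak group inverse. First I would invoke Lemma 2.7 with $m=1$: it states that $a\in R^{\overline{W}_1}$ if and only if $a=a^1\in R^{\overline{W}}$, and that in this case $a^{\overline{W}_1}=a^{0}(a^1)^{\overline{W}}=a^{\overline{W}}$. Hence $R^{\overline{W}_1}=R^{\overline{W}}$ and the two generalized inverses literally coincide, so any characterization of $m$-weak group invertibility at $m=1$ is automatically a characterization of weak group invertibility.

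With this identification in hand, I would simply substitute $m=1$ into Theorem 3.5. Condition (1), namely $a\in R^D$, is untouched by the substitution. In condition (2) the idempotent $p$, the right invertibility of $a+p$, and the range condition $a^DR=(1-p)R$ are all independent of $m$, while the orthogonality-type condition $(a^D)^*a^{m}p=0$ becomes $(a^D)^*ap=0$. The explicit formula likewise specializes: $a^{\overline{W}}=a^{\overline{W}_1}=a^D(1-p)$. This produces exactly the desired equivalent conditions and canonical form for the weak group inverse.

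The argument is therefore a direct corollary that requires no new computation. The single point needing verification is the identification $R^{\overline{W}_1}=R^{\overline{W}}$ (with matching inverses), and this is immediate from Lemma 2.7 at $m=1$. I expect no genuine obstacle here, since both the hypotheses and the conclusion of Theorem 3.5 reduce cleanly under the substitution $m=1$; the proof reduces to the single sentence that the result follows by taking $m=1$ in Theorem 3.5.
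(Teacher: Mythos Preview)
Your proposal is correct and follows exactly the paper's approach: the paper's proof consists of the single sentence ``This is obvious by choosing $m=1$ in Theorem 3.5.'' Your additional remark invoking Lemma 2.7 to justify $R^{\overline{W}_1}=R^{\overline{W}}$ and $a^{\overline{W}_1}=a^{\overline{W}}$ merely makes explicit what the paper leaves implicit.
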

\begin{enumerate}
\item [(1)] $a\in R^D$;
\vspace{-.5mm}
\item [(2)] There exists an idempotent $p\in R$ such that $$a+p\in \mathcal{A}~\mbox{is right invertible},(a^D)^*ap=0~\mbox{and} ~a^DR=(1-p)R.$$
\end{enumerate}
In this case, $a^{\overline{W}}=a^D(1-p).$
\begin{proof} This is obvious by choosing $m=1$ in Theorem 3.5.\end{proof}

Let $A\in {\Bbb C}^{n\times n}$ have group inverse. Then there exists $E=E^2\in {\Bbb C}^{n\times n}$ such that $A+E~\mbox{is invertible},AE=EA=0$ (see~[1, Proposition 8.24]. Contract to this fact, we now derive

\begin{cor} Let $A\in {\Bbb C}^{n\times n}$. Then there exists $E=E^2\in {\Bbb C}^{n\times n}$ such that $$A+E~\mbox{is invertible},(A^D)^*AE=0~\mbox{and} ~\mathcal{R}(A^D)=\mathcal{R}(I_n-E).$$\end{cor}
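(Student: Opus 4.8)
The plan is to read Corollary 3.7 as the complex-matrix specialization of Corollary 3.6 in the weak group case $(m=1)$, supplemented by two finite-dimensional simplifications. First I would observe that every $A\in{\Bbb C}^{n\times n}$ is weak group invertible, so the hypotheses of Corollary 3.6 are met with no restriction on $A$. Indeed, every square complex matrix is Drazin invertible, hence core-EP invertible; and by the defining system $AX^2=X,\ AX=A^{\overline{D}}A$ recorded in the introduction, the weak group inverse $A^{\overline{W}}$ then exists for every $A$. Thus $A\in({\Bbb C}^{n\times n})^{\overline{W}}$ unconditionally, which is exactly what makes the present corollary a blanket existence statement rather than a characterization.

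Next I would apply Corollary 3.6 (the case $m=1$) to this $A$. This supplies an idempotent $p\in{\Bbb C}^{n\times n}$ such that $A+p$ is right invertible, $(A^D)^*Ap=0$, and $A^DR=(1-p)R$ as right ideals. Setting $E:=p$, the second condition reads verbatim as $(A^D)^*AE=0$. For the range condition I would pass from right ideals to column spaces exactly as in the transition from Theorem 3.3 to Corollary 3.4: for matrices one has $XR=YR$ precisely when $\mathcal{R}(X)=\mathcal{R}(Y)$, so $A^DR=(1-p)R$ becomes $\mathcal{R}(A^D)=\mathcal{R}(I_n-E)$.

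The only remaining gap is that Corollary 3.6 yields right invertibility of $A+p$, whereas the statement asserts full invertibility. This is where finite-dimensionality enters, and I expect it to be the only point needing comment, though it is routine: a square matrix $M$ over a field with $MB=I_n$ for some $B$ satisfies $\det(M)\det(B)=1$, whence $\det M\neq0$ and $M$ is invertible. Applying this to $A+E$ upgrades "right invertible" to "invertible," completing the argument; the substance of the corollary is already carried by Corollary 3.6, the present statement merely exhibiting the explicit idempotent $E=p$ and reading the three conditions in matrix-theoretic language. As a sanity check, one may verify directly that $E=I_n-AA^{\overline{W}}$ works, matching the choice $p=1-q$ with $q=aa^{\overline{W}_m}$ made in the proof of Theorem 3.5 at $m=1$.
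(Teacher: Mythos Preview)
Your proposal is correct and matches the paper's intended argument: the corollary is stated without proof precisely because it is the complex-matrix instance of Corollary~3.6, and you have supplied exactly that specialization together with the two routine finite-dimensional observations (every square complex matrix is weak group invertible, and one-sided invertibility implies two-sided invertibility). Nothing further is needed.
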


\section{relations with core-EP inverses}

In this section we investigate relations between $m$-weak group inverse and core-EP inverse. We now derive

\begin{thm} Let $a\in R^{\overline{D}}$. Then $a\in R^{\overline{W}_m}$ and $$a^{\overline{W}_m}=(a^{\overline{D}})^{m+1}a^m.$$\end{thm}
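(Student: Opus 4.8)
The plan is to write $d=a^{\overline{D}}$, set $k=\mathrm{ind}(a)$, and verify directly that $x=d^{m+1}a^m$ satisfies the three defining identities of the $m$-weak group inverse, namely $ax^2=x$, $xa^{k+1}=a^k$ and $(a^k)^*a^{m+1}x=(a^k)^*a^m$; uniqueness then forces $a^{\overline{W}_m}=(a^{\overline{D}})^{m+1}a^m$. First I would record the elementary consequences of the three core-EP axioms $ad^2=d$, $(ad)^*=ad$ and $a^k=da^{k+1}$. Absorbing one factor of $a$ at a time in $ad^2=d$ gives $a\,d^{j+1}=d^{j}$ for every $j\ge 1$, and iterating this yields the two collapses $a^m d^{m+1}=d$ and $a^{m+1}d^{m+1}=ad$. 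A short induction from $da^{k+1}=a^k$ (using $d^{j}a^{k+j+1}=a^{k+1}$ at each step) gives $d^{j}a^{k+j}=a^k$ for all $j\ge 1$. Finally, the fixed-point identity $ad\,a^k=a^k$ follows from $ad\,a^k=ad^2a^{k+1}=da^{k+1}=a^k$, where the first equality inserts $a^k=da^{k+1}$.

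With these in hand the first two identities are routine. For $ax^2=x$ I would compute $ax^2=(ad^{m+1})a^m d^{m+1}a^m=d^m(a^m d^{m+1})a^m=d^m\,d\,a^m=d^{m+1}a^m=x$, using $ad^{m+1}=d^m$ together with $a^m d^{m+1}=d$. For the Drazin-type identity, $xa^{k+1}=d^{m+1}a^{m+k+1}=d^{m+1}a^{k+(m+1)}=a^k$ by the collapse $d^{j}a^{k+j}=a^k$ taken at $j=m+1$.

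The Hermitian identity is the crux, and this is the only step where I anticipate a genuine subtlety. Using $a^{m+1}d^{m+1}=ad$ one finds $a^{m+1}x=a^{m+1}d^{m+1}a^m=(ad)a^m$, but one cannot in general conclude $a^{m+1}x=a^m$, since $ad\,a^m=a^m$ may fail when $m<k$. Instead I would retain the factor $(a^k)^*$ and exploit that $ad$ is a Hermitian idempotent fixing $a^k$: from $ad\,a^k=a^k$ and $(ad)^*=ad$ one gets $(a^k)^*=\big((ad)a^k\big)^*=(a^k)^*(ad)^*=(a^k)^*(ad)$, whence $(a^k)^*a^{m+1}x=(a^k)^*(ad)a^m=(a^k)^*a^m$. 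This verifies all three defining equations, so $a\in R^{\overline{W}_m}$ with the stated value.

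As an alternative route, and as a consistency check, one can argue through Lemma 2.7. Since powers of a core-EP invertible element are again core-EP invertible with $(a^m)^{\overline{D}}=(a^{\overline{D}})^m=d^m$, and since a core-EP invertible element is weak group invertible with weak group inverse $(d^m)^2a^m=d^{2m}a^m$ (the case $m=1$ applied to $a^m$), Lemma 2.7 gives $a^{\overline{W}_m}=a^{m-1}(a^m)^{\overline{W}}=a^{m-1}d^{2m}a^m$; the collapse $a^{m-1}d^{2m}=d^{m+1}$, obtained once more from $a\,d^{j+1}=d^{j}$, then recovers $(a^{\overline{D}})^{m+1}a^m$. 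I would nonetheless present the direct verification above as the main argument, since it is self-contained and does not presuppose the $m=1$ case.
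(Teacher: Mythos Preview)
Your proof is correct and follows essentially the same direct-verification strategy as the paper: set $x=(a^{\overline{D}})^{m+1}a^m$ and check the three defining relations using the core-EP identities. The only cosmetic difference is in the third relation: the paper verifies the Hermitian form $\big((a^m)^*a^{m+1}x\big)^*=(a^m)^*a^{m+1}x$, whereas you verify the equality $(a^k)^*a^{m+1}x=(a^k)^*a^m$ from Definition~1.1; both follow at once from the same core computation $a^{m+1}x=(aa^{\overline{D}})a^m$ together with $(aa^{\overline{D}})^*=aa^{\overline{D}}$ and $(aa^{\overline{D}})a^k=a^k$.
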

\begin{proof} By hypothesis, we have $$a^{\overline{D}}=a(a^{\overline{D}})^2, (aa^{\overline{D}})^*=aa^{\overline{D}}~\mbox{and} ~a^n=a^{\overline{D}}a^{n+1}$$ for some $n\in {\Bbb N}$. Set
$x=(a^{\overline{D}})^{m+1}a^m$. Then
$$\begin{array}{rll}
ax^2&=&a[(a^{\overline{D}})^{m+1}a^m]^2=[(a^{\overline{D}})^ma^m][(a^{\overline{D}})^{m+1}a^m]\\
&=&(a^{\overline{D}})^m[a^m(a^{\overline{D}})^{m+1}]a^m=(a^{\overline{D}})^{m+1}a^m=x,\\
(a^m)^*a^{m+1}x&=&(a^m)^*a^{m+1}[(a^{\overline{D}})^{m+1}a^m]=(a^m)^*[aa^{\overline{D}}]a^m,\\
\big((a^m)^*a^{m+1}x\big)^*&=&(a^m)^*[aa^{\overline{D}}]^*a^m=(a^m)^*[aa^{\overline{D}}]a^m=(a^m)^*a^{m+1}x,\\
a^n&=&a^{\overline{D}}a^{n+1}=a^{\overline{D}}[a^{\overline{D}}a^{n+1}]a=(a^{\overline{D}})^2a^{n+2}\\
&=&\cdots =(a^{\overline{D}})^{m+1}a^{m+n+1}=xa^{n+1}.\end{array}$$
Therefore $a^{\overline{W}_m}=(a^{\overline{D}})^{m+1}a^m.$\end{proof}

\begin{cor} Let $a\in R^{\overline{D}}$ with $ind(a)=k$. Then the following are equivalent:\end{cor}
\begin{enumerate}
\item [(1)] $a^{\overline{W}_m}=x$.
\vspace{-.5mm}
\item [(2)] $ax^2=x$ and $ax=(a^{\overline{D}})^ma^m.$
\vspace{-.5mm}
\item [(3)] $im(x)=im(a^k)$ and $ax=(a^{\overline{D}})^ma^m.$
\end{enumerate}
\begin{proof} $(1)\Rightarrow (2)$ By virtue of Theorem 4.1, $a\in R^{\overline{W}_m}$ and $x=a^{\overline{W}_m}=(a^{\overline{D}})^{m+1}a^m$.
Accordingly, $ax^2=x$ and $ax=a(a^{\overline{D}})^{m+1}a^m=(a^{\overline{D}})^ma^m$, as desired.

$(2)\Rightarrow (3)$ Since $ax^2=x$, we have $x=a^kx^k$, and so $im(x)\subseteq im(a^k)$. On the other hand, $a^k=xa^{k+1}\subseteq im(x)$, and so
$im(a^k)\subseteq im(x)$, as desired.

$(3)\Rightarrow (1)$ By hypotheses, $im(x)=im(a^k)$ and $ax=(a^{\overline{D}})^ma^m.$
Write $x=a^kr$ for some $r\in R$. Then $$\begin{array}{rll}
ax^2&=&(ax)x=(a^{\overline{D}})^ma^{m+k}r\\
&=&(a^{\overline{D}})^{m-1}[a^{\overline{D}}a^{k+1}]a^{m-1}r\\
&=&(a^{\overline{D}})^ma^ka^{m-1}r\\
&=&(a^{\overline{D}})^{m-1}a^{m-1+k}r=\cdots =a^kr=x.
\end{array}$$ Thus $x=(ax)x=[(a^{\overline{D}})^ma^m]x=(a^{\overline{D}})^ma^{m-1}(ax)
=(a^{\overline{D}})^ma^{m-1}[(a^{\overline{D}})^ma^m]=(a^{\overline{D}})^ma[(a^{\overline{D}})^2
=(a^{\overline{D}})^{m+1}a^m$. In light of Theorem 4.1, $x=a^{\overline{W}_m}$, as asserted.\end{proof}

\begin{cor} Let $a\in R^{\overline{D}}$ with $ind(a)=k$. Then the following are equivalent:\end{cor}
\begin{enumerate}
\item [(1)] $a^{\overline{W}_m}=x$.
\vspace{-.5mm}
\item [(2)] $xax=x, im(x)=im(a^k)$ and $(a^k)^*a^{m+1}x=(a^k)^*a^m.$
\vspace{-.5mm}
\item [(3)] $xax=x, im(x)=im(a^k)$ and $(a^D)^*a^{m+1}x=(a^D)^*a^m.$
\end{enumerate}
\begin{proof} $(1)\Rightarrow (2)$ By hypothesis, there exists $x\in R$ such that $$ax^2=x, (a^k)^*a^{m+1}x=(a^k)^*a^m~\mbox{and} ~a^k=xa^{k+1}.$$
Then $xax=xa^{k+1}x^{k+1}=a^kx^{k+1}=ax^2=x$. As $x=ax=a^kx^{k+1}$, we see that $im(x)=im(a^k)$, as required.

$(2)\Rightarrow (3)$ By hypothesis, we have $x\in R$ such that
$xax=x, im(x)=im(a^k)$ and $(a^k)^*a^{m+1}x=(a^k)^*a^m.$
Then $$((a^D)^{k+1})^*(a^k)^*a^{m+1}x=((a^D)^{k+1})^*(a^k)^*a^m.$$
Therefore $(a^D)^*a^{m+1}x=(a^D)^*a^m,$ as desired.

$(3)\Rightarrow (1)$ Since $im(x)=im(a^k)$, there exists $z\in R$ such that $x=a^kz$.
Since $(a^D)^*a^{m+1}x=(a^D)^*a^m,$ we have $[(a^m)^D]^*a^{m+1}x=[(a^m)^D]^*a^m.$ Then
$$\begin{array}{rll}
[(a^m)^D]^*(a^m)^D[a^{2m+k+1}z]&=&[(a^m)^D]^*[(a^m)^Da^ma^Da^{k+1}]a^{m+1}z\\
&=&[(a^m)^D]^*a^{m+1}a^kz=[(a^m)^D]^*a^{m+1}x\\
&=&[(a^m)^D]^*a^m.
\end{array}$$ In view of ~[3, Theorem 5.4], we have
$$(a^m)^{\overline{W}}=[(a^m)^D]^3[a^{2m+k+1}z]=(a^m)^Da^{k+1}z=(a^m)^Dax.$$
In light of Lemma 2.7, $a^{\overline{W}_m}=a^{m-1}(a^m)^{\overline{W}}=a^{m-1}(a^m)^Dax=aa^Dx=a^Da^{k+1}z=a^kz=x,$ as required.\end{proof}

\begin{cor} Let $a\in R^{\overline{D}}$. Then the following are equivalent:\end{cor}
\begin{enumerate}
\item [(1)] $a^{\overline{W}_m}=a^{\overline{D}}$;
\vspace{-.5mm}
\item [(2)] $aa^{\overline{D}}=(a^{\overline{D}})^ma^m$.
\vspace{-.5mm}
\item [(3)] $aa^{\overline{W}_m}=aa^{\overline{D}}$.\end{enumerate}
\begin{proof} $(1)\Rightarrow (2)$ In view of Theorem 4.1, $$a^{\overline{W}_m}=(a^{\overline{D}})^{m+1}a^m.$$ Hence,
$aa^{\overline{D}}=aa^{\overline{W}_m}=a(a^{\overline{D}})^{m+1}a^m=(a^{\overline{D}})^ma^m$, as desired.

$(2)\Rightarrow (1)$ Since $aa^{\overline{D}}=(a^{\overline{D}})^ma^m$, it follows by Theorem 4.1 that $$\begin{array}{rll}
a^{\overline{W}_m}&=&(a^{\overline{D}})^{m+1}a^m\\
&=&a^{\overline{D}}[(a^{\overline{D}})^ma^m]\\
&=&a^{\overline{D}}aa^{\overline{D}}\\
&=&a^{\overline{D}},
\end{array}$$ as desired.

$(1)\Rightarrow (3)$ This is trivial.

$(3)\Rightarrow (1)$ By virtue of Theorem 4.1, we have $a^{\overline{W}_m}=(a^{\overline{D}})^{m+1}a^m.$ Accordingly, we check that
$$\begin{array}{rll}
a^{\overline{W}_m}&=&(a^{\overline{D}})^{m+1}a^m\\
&=&aa^D(a^{\overline{D}})^{m+1}a^m\\
&=&a^D(aa^{\overline{W}_m})\\
&=&a^D(aa^{\overline{D}})\\
&=&a^{\overline{D}},
\end{array}$$ as asserted.\end{proof}

The GG inverse of $A$ is defined to the unique solution to the following system: $$AX=A^{\overline{D}}A^2, \mathcal{R}(X)\subseteq \mathcal{R}(A^k).$$ For properties of GG inverse, we refer the reader to ~[4]. We come now to provide a new characterization for the GG inverse of a complex matrix.

\begin{cor} Let $A,X\in {\Bbb C}^{n\times n}$. Then the following are equivalent:\end{cor}
\begin{enumerate}
\item [(1)] $A$ has GG-inverse $X$.
\item [(2)] $X=(A^{\overline{D}})^3A^2$.
\item [(3)] $X=(A^{\overline{W}})^2A$.
\item [(4)] There exist $B,C\in {\Bbb C}^{n\times n}$ such that $$A=Y+Z, Y^*AZ=ZY=0, Y~\mbox{has group inverse}, Z~\mbox{is nilpotent}.$$
\item [(5)] There exists $k\in {\Bbb N}$ such that
$$X=AX^2, (A^D)^*A^3X=(A^D)^*A^2, A^k=XA^{k+1}.$$
\end{enumerate}
\begin{proof} $(1)\Rightarrow (2)$ This is proved in Theorem 4.1.

$(2)\Rightarrow (3)$ In view of Theorem 4.1, $A^{\overline{W}}=(A^{\overline{D}})^2A$. Then
$X=(A^{\overline{D}})^3A^2=(A^{\overline{D}})^2[AA^{\overline{D}})^2]A^2=(A^{\overline{W}})^2A$, as required.

$(3)\Rightarrow (1)$ Let $X=(A^{\overline{W}})^2A$. Then
$$X=[(A^{\overline{D}})^2A]^2A=(A^{\overline{D}})^3A^2.$$ In light of Theorem 4.1, we prove that
$X$ is the GG inverse of $A$, as desired.

$(1)\Leftrightarrow (4)\Leftrightarrow (5)$ These are immediate by Theorem 2.1.\end{proof}

We come now to present the representation of the $m$-weak inverse of a core-EP invertible element.

\begin{thm} Let $a\in R^{\overline{D}}$ and let $p=aa^{\overline{D}}$. Then
$$a=\left(
\begin{array}{cc}
t&s\\
0&n
\end{array}
\right)_p, a^{\overline{W}_m}=
\left(
\begin{array}{cc}
t^{-1}&t^{-(m+1)}c_m\\
0&0
\end{array}
\right)_p,$$ where $c_1=s, c_{i+1}=tc_i+sn^i ~(i\in {\Bbb N})$.\end{thm}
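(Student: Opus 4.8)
The plan is to compute $a^{\overline{W}_m}$ directly from the closed formula $a^{\overline{W}_m}=(a^{\overline{D}})^{m+1}a^m$ of Theorem 4.1, after expressing every element in the Peirce decomposition of $R$ attached to the projection $p=aa^{\overline{D}}$. Once $a$ is shown to have the asserted block-upper-triangular form relative to $p$, the factors $(a^{\overline{D}})^{m+1}$ and $a^m$ are explicit block matrices, and the stated value of $a^{\overline{W}_m}$ drops out of a single $2\times 2$ block product. Thus the substance of the proof is the justification of the block form and the identification of the corner data $t,s,n$, not the final multiplication.

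First I would record the identities that make $p=aa^{\overline{D}}$ tractable. From the defining relation $a^{\overline{D}}=a(a^{\overline{D}})^2$ one gets $pa^{\overline{D}}=a^{\overline{D}}$ and, by iteration, $a^{\overline{D}}=a^{j}(a^{\overline{D}})^{j+1}$ for every $j\ge 1$; feeding $a^k=a^{\overline{D}}a^{k+1}$ (with $k=ind(a)$) into the case $j=k$ yields both $a^{\overline{D}}aa^{\overline{D}}=a^{\overline{D}}$ and the key identity $a^{\overline{D}}a^2a^{\overline{D}}=p$. Setting $t=pap$, $s=pa(1-p)$ and $n=(1-p)a(1-p)$, the relation $a^{\overline{D}}a^2a^{\overline{D}}=p$ makes the lower-left corner $(1-p)ap=a^2a^{\overline{D}}-aa^{\overline{D}}a^2a^{\overline{D}}$ collapse to $0$, so $a$ is block upper triangular with respect to $p$. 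The same identity gives $ta^{\overline{D}}=a^{\overline{D}}t=p$, so $t$ is invertible in the corner ring $pRp$ with $t^{-1}=a^{\overline{D}}$; and $pa^{k+1}=a^{k+1}$ forces $(1-p)a^{k+1}=0$, hence $n^{k+1}=0$ and $n\in R^{nil}$. This is precisely the asserted form of $a$, with the core-EP inverse appearing as $a^{\overline{D}}=\left(\begin{smallmatrix}t^{-1}&0\\0&0\end{smallmatrix}\right)_p$.

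With the block form secured, the remainder is bookkeeping. Since $a$ is block upper triangular, an easy induction gives $a^m=\left(\begin{smallmatrix}t^m&c_m\\0&n^m\end{smallmatrix}\right)_p$, and comparing the upper-right corners in $a\cdot a^m=a^{m+1}$ reproduces exactly the recursion $c_1=s$, $c_{i+1}=tc_i+sn^i$. As $a^{\overline{D}}$ lies in the corner $pRp$, its powers are $(a^{\overline{D}})^{m+1}=\left(\begin{smallmatrix}t^{-(m+1)}&0\\0&0\end{smallmatrix}\right)_p$, and Theorem 4.1 then gives $a^{\overline{W}_m}=(a^{\overline{D}})^{m+1}a^m=\left(\begin{smallmatrix}t^{-1}&t^{-(m+1)}c_m\\0&0\end{smallmatrix}\right)_p$, which is the claim.

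The hard part will be the block form rather than the computation: everything hinges on the single nonobvious identity $a^{\overline{D}}a^2a^{\overline{D}}=p$, from which both the vanishing of $(1-p)ap$ and the invertibility of $t$ in $pRp$ follow. Once that identity is in place (via the iterate $a^{\overline{D}}=a^{j}(a^{\overline{D}})^{j+1}$ combined with $a^k=a^{\overline{D}}a^{k+1}$), the recursion for $c_m$ is forced by the product $a\cdot a^m$ and the final answer is a single multiplication of triangular blocks.
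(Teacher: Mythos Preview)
Your proposal is correct and follows essentially the same route as the paper: establish the Peirce decomposition of $a$ with respect to the projection $p=aa^{\overline{D}}$, show the lower-left block vanishes, identify $a^{\overline{D}}$ with $\left(\begin{smallmatrix}t^{-1}&0\\0&0\end{smallmatrix}\right)_p$, and then read off $a^{\overline{W}_m}=(a^{\overline{D}})^{m+1}a^m$ from Theorem~4.1 as a block product. Your derivation of $(1-p)ap=0$ via the identity $a^{\overline{D}}a^{2}a^{\overline{D}}=p$ is a minor variant of the paper's direct computation $(1-aa^{\overline{D}})a^{k+1}(a^{\overline{D}})^{k}=0$, and you supply a few justifications (invertibility of $t$ in $pRp$, nilpotency of $n$, the recursion for $c_m$) that the paper leaves implicit.
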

\begin{proof} Since $p=p^2=p^*$, we have a Pierce decomposition of $a$:
$a=\left(
\begin{array}{cc}
t&s\\
a_{21}&n
\end{array}
\right)_p$, where $a_{21}=(1-p)ap$. Set $k=ind(a)$. Then we have $$\begin{array}{rll}
a_{21}&=&(1-aa^{\overline{D}})a^2a^{\overline{D}}\\
&=&(1-aa^{\overline{D}})a^{k+1}(a^{\overline{D}})^k\\
&=&[a^{k+1}-a(a^{\overline{D}}a^{k+1})](a^{\overline{D}})^k\\
&=&0.
\end{array}$$
We easily see that $$a^{\overline{D}}=\left(
\begin{array}{cc}
t^{-1}&0\\
0&0
\end{array}
\right)_p.$$ According to Theorem 4.1, $$\begin{array}{rll}
a^{\overline{W}_m}&=&(a^{\overline{D}})^{m+1}a^m\\
&=&\left(
\begin{array}{cc}
t^{-1}&0\\
0&0
\end{array}
\right)_p^{m+1}\left(
\begin{array}{cc}
t&s\\
0&n
\end{array}
\right)_p^m\\
&=&\left(
\begin{array}{cc}
t^{-(m+1)}&0\\
0&0
\end{array}
\right)_p\left(
\begin{array}{cc}
t^m&c_m\\
0&n^m
\end{array}
\right)_p\\
&=&\left(
\begin{array}{cc}
t^{-1}&t^{-(m+1)}c_m\\
0&0
\end{array}
\right)_p,
\end{array}$$
as asserted.\end{proof}

\begin{cor} Let $a\in R^{\overline{D}}$. Then the following are equivalent:\end{cor}
\begin{enumerate}
\item [(1)] $a^{\overline{W}_m}=a^{\overline{D}}$.
\vspace{-.5mm}
\item [(2)] $c_m=0$, where $c_1=s, c_{i+1}=tc_i+sn^i ~(i\in {\Bbb N})$.
\end{enumerate}
\begin{proof} By virtue of Theorem 4.6, $a^{\overline{W}_m}=a^{\overline{D}}$ if and only if $c_m=0$.
This completes the proof.\end{proof}

\begin{cor} Let $a\in R^{\overline{D}}$ and $m<n$. Then the following are equivalent:\end{cor}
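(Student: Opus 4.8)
The plan is to argue entirely through the core-EP (Pierce) decomposition supplied by Theorem 4.6. Writing $p=aa^{\overline{D}}$, that theorem gives
\[
a=\left(\begin{array}{cc} t & s\\ 0 & n\end{array}\right)_p,\qquad
a^{\overline{W}_m}=\left(\begin{array}{cc} t^{-1} & t^{-(m+1)}c_m\\ 0 & 0\end{array}\right)_p,
\]
with $c_1=s$ and $c_{i+1}=tc_i+sn^{i}$. Since the very same formula computes $a^{\overline{W}_n}$, the two elements have identical $(1,1)$, $(2,1)$ and $(2,2)$ corners and can differ only in the $(1,2)$ corner. As the corners of an element relative to the idempotent $p$ are uniquely determined, equality of $a^{\overline{W}_m}$ and $a^{\overline{W}_n}$ is equivalent to the single identity $t^{-(m+1)}c_m=t^{-(n+1)}c_n$, and this is the condition I would extract.

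First I would record the closed form $c_i=\sum_{j=0}^{i-1}t^{\,i-1-j}s\,n^{j}$, which follows by a one-line induction from the recurrence; substituting it gives the convenient expression $t^{-(i+1)}c_i=\sum_{j=0}^{i-1}t^{-(j+2)}s\,n^{j}$. The hypothesis $m<n$ then enters cleanly: the difference of the two off-diagonal corners telescopes to $t^{-(n+1)}c_n-t^{-(m+1)}c_m=\sum_{j=m}^{n-1}t^{-(j+2)}s\,n^{j}$, so the equivalence sought becomes the vanishing of this finite sum. Multiplying on the left by $t^{\,n+1}$ rewrites the criterion purely in terms of $c_m$ and $c_n$, for instance as $c_n=t^{\,n-m}c_m$, which should match the stated second condition.

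With this in place the two implications are immediate. For the forward direction, $a^{\overline{W}_m}=a^{\overline{W}_n}$ forces the corner identity by uniqueness of Pierce corners; for the converse, the corner identity makes the two canonical matrices coincide entry by entry, whence $a^{\overline{W}_m}=a^{\overline{W}_n}$. Exactly as in the one-line proof of Corollary 4.7, no step here goes beyond reading off and comparing entries in the decomposition of Theorem 4.6.

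The step demanding the most care is the algebraic simplification of the corner condition rather than its derivation: to present the criterion in its cleanest checkable form I would use that $n$ is nilpotent, so that all summands with $n^{j}=0$ drop out. In particular, once the index exceeds $ind(a)$ the tail vanishes automatically and $a^{\overline{W}_m}$ already collapses to $a^{D}$, in harmony with Corollary 4.7; and if a term-by-term criterion (such as $s\,n^{j}=0$ for $m\le j\le n-1$) is intended rather than the lumped sum, I would obtain it by peeling off the highest surviving power of $n$ and inducting downward, the properness of the involution being available should any passage from a starred identity back to the underlying element be required.
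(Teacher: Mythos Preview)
Your proposal proves a different statement from the one in the paper. Because the \verb|\begin{cor}...\end{cor}| block you were shown ends at ``the following are equivalent:'' you had to guess the two items, and you guessed that (1) is $a^{\overline{W}_m}=a^{\overline{W}_n}$. In fact the paper's item (1) is
\[
\bigl[a^{\overline{W}_m}\bigr]^{\,n}=(a^{\,n})^{\overline{W}},
\]
the $n$-th power of the $m$-weak group inverse versus the (ordinary) weak group inverse of $a^{n}$, and item (2) is the single relation $t\,c_m-c_m\,n=t^{m}s$ (the paper prints $t^{11}$, a typo for $t^{m}$). Note that the symbol $n$ in item (2) is the nilpotent $(2,2)$-corner from Theorem~4.6, not the integer; in particular condition (2) does not involve the integer $n$ at all, whereas your proposed criterion $c_n=t^{\,n-m}c_m$ does. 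So the target you set up, the telescoping sum $\sum_{j=m}^{n-1}t^{-(j+2)}s\,n^{j}=0$, is simply not the condition being characterized.

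The paper's actual argument also hinges on an ingredient you would not have produced from the Pierce matrices alone: it quotes \cite{23}, Theorem~4.13, to assert that $[a^{\overline{W}_m}]^{\,n}=(a^{n})^{\overline{W}}$ is equivalent (for $m<n$) to the commutativity $a\,a^{\overline{W}_m}=a^{\overline{W}_m}a$. Only after that reduction does it do what you do---write both sides as $2\times 2$ matrices relative to $p=aa^{\overline{D}}$ and compare the $(1,2)$-corners, obtaining $t^{-m}c_m=t^{-1}s+t^{-(m+1)}c_m n$, i.e.\ $t\,c_m-c_m\,n=t^{m}s$. Your matrix-corner methodology is therefore the right second half, but it needs to be aimed at $a\,a^{\overline{W}_m}$ versus $a^{\overline{W}_m}a$, and the first half---reducing the power/weak-group identity to that commutator condition---is the external lemma you are missing.
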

\begin{enumerate}
\item [(1)] $[a^{\overline{W}_m}]^n=(a^n)^{\overline{W}}$.
\vspace{-.5mm}
\item [(2)] $tc_m-c_mn=t^{11}s$, where $c_1=s, c_{i+1}=tc_i+sn^i ~(i\in {\Bbb N})$.
\end{enumerate}
\begin{proof} Let $p=aa^{\overline{D}}\in R^{\overline{D}}$. In view of Theorem 4.6, we have
$$a=\left(
\begin{array}{cc}
t&s\\
0&n
\end{array}
\right)_p, a^{\overline{W}_m}=
\left(
\begin{array}{cc}
t^{-1}&t^{-(m+1)}c_m\\
0&0
\end{array}
\right)_p,$$ where $c_1=s, c_{i+1}=tc_i+sn^i ~(i\in {\Bbb N})$. Then
$$\begin{array}{rll}
aa^{\overline{W}_m}&=&\left(
\begin{array}{cc}
t&s\\
0&n
\end{array}
\right)\left(
\begin{array}{cc}
t^{-1}&t^{-(m+1)}c_m\\
0&0
\end{array}
\right)\\
&=&\left(
\begin{array}{cc}
p&t^{-m}c_m\\
0&0
\end{array}
\right),\\
a^{\overline{W}_m}a&=&\left(
\begin{array}{cc}
t^{-1}&t^{-(m+1)}c_m\\
0&0
\end{array}
\right)\left(
\begin{array}{cc}
t&s\\
0&n
\end{array}
\right)\\
&=&\left(
\begin{array}{cc}
p&t^{-1}s+t^{-(m+1)}c_mn\\
0&0
\end{array}
\right)_p.
\end{array}$$ By virtue of ~[23, Theorem 4.13], $[a^{\overline{W}_m}]^n=(a^n)^{\overline{W}}$ if and only if
$aa^{\overline{W}_m}=a^{\overline{W}_m}a$ if and only if $t^{-m}c_m=t^{-1}s+t^{-(m+1)}c_mn$, i.e., $tc_m-c_mn=t^{11}s$.
This completes the proof.\end{proof}

\vskip10mm

\end{document}